\pdfoutput=1
\documentclass[12pt,reqno]{amsart}
\usepackage{latexsym}
\usepackage{amsmath}
\usepackage{amsthm}
\usepackage{amssymb}
\usepackage{graphicx}
\usepackage{amsfonts}
\usepackage{fullpage}
\usepackage{comment}
\usepackage{hyperref}

\title[Quantitative uniqueness estimates for $p$-Laplace type equations]{Quantitative uniqueness estimates for $p$-Laplace type equations in the plane}

\author[Chang-Yu Guo]{Chang-Yu Guo}
\address[Chang-Yu Guo]{Department of Mathematics and Statistics, University of Jyv\"askyl\"a and Department of Mathematics, University of Fribourg}
\email{changyu.guo@unifr.ch}

\author[Manas Kar]{Manas Kar}
\address[Manas Kar]{Department of Mathematics and Statistics, University of Jyv\"askyl\"a}
\email{manas.m.kar@jyu.fi}

\newcommand*{\R}{\mathbb{R}}

\newcommand*{\doo}{\partial}
\newcommand*{\ol}[1]{\overline{#1}}

\newcommand{\abs}[1]{\left| #1 \right|}

\theoremstyle{plain}
\newtheorem{theorem}{Theorem}
\newtheorem{lemma}[theorem]{Lemma}

\newtheorem{proposition}[theorem]{Proposition}

\theoremstyle{definition}

\theoremstyle{remark}
\newtheorem{remark}[theorem]{Remark}

\newtheorem{case}{Case}

\DeclareMathOperator{\dive}{div}

%
%
%
\def\vint_#1{\mathchoice%
          {\mathop{\kern 0.2em\vrule width 0.6em height 0.69678ex
depth -0.58065ex
                  \kern -0.8em \intop}\nolimits_{\kern -0.4em#1}}%
          {\mathop{\kern 0.1em\vrule width 0.5em height 0.69678ex
depth -0.60387ex
                  \kern -0.6em \intop}\nolimits_{#1}}%
          {\mathop{\kern 0.1em\vrule width 0.5em height 0.69678ex
              depth -0.60387ex
                  \kern -0.6em \intop}\nolimits_{#1}}%
          {\mathop{\kern 0.1em\vrule width 0.5em height 0.69678ex
depth -0.60387ex
                  \kern -0.6em \intop}\nolimits_{#1}}}
\def\vintslides_#1{\mathchoice%
          {\mathop{\kern 0.1em\vrule width 0.5em height 0.697ex depth -0.581ex
                  \kern -0.6em \intop}\nolimits_{\kern -0.4em#1}}%
          {\mathop{\kern 0.1em\vrule width 0.3em height 0.697ex depth -0.604ex
                  \kern -0.4em \intop}\nolimits_{#1}}%
          {\mathop{\kern 0.1em\vrule width 0.3em height 0.697ex depth -0.604ex
                  \kern -0.4em \intop}\nolimits_{#1}}%
          {\mathop{\kern 0.1em\vrule width 0.3em height 0.697ex depth -0.604ex
                  \kern -0.4em \intop}\nolimits_{#1}}}

\newcommand{\aveint}[2]{\mathchoice%
          {\mathop{\kern 0.2em\vrule width 0.6em height 0.69678ex
depth -0.58065ex
                  \kern -0.8em \intop}\nolimits_{\kern -0.45em#1}^{#2}}%
          {\mathop{\kern 0.1em\vrule width 0.5em height 0.69678ex
depth -0.60387ex
                  \kern -0.6em \intop}\nolimits_{#1}^{#2}}%
          {\mathop{\kern 0.1em\vrule width 0.5em height 0.69678ex
depth -0.60387ex
                  \kern -0.6em \intop}\nolimits_{#1}^{#2}}%
          {\mathop{\kern 0.1em\vrule width 0.5em height 0.69678ex
depth -0.60387ex
                  \kern -0.6em \intop}\nolimits_{#1}^{#2}}}

\hyphenation{an-iso-trop-ic}
\numberwithin{theorem}{section}
\numberwithin{equation}{section}

\begin{document}

\begin{abstract}
In this article our main concern is to prove the quantitative unique estimates for the $p$-Laplace equation, $1<p<\infty$, with a locally Lipschitz drift in the plane. To be more precise, let $u\in W^{1,p}_{loc}(\R^2)$ be a nontrivial weak solution to
\[
\text{div}(\abs{\nabla u}^{p-2} \nabla u) + W\cdot(\abs{\nabla u}^{p-2}\nabla u) = 0 \ \text{ in }\ \mathbb{R}^2,
\]
where $W$ is a locally Lipschitz real vector satisfying $\|W\|_{L^q(\R^2)}\leq \tilde{M}$ for $q\geq \max\{p,2\}$. Assume that $u$ satisfies certain a priori assumption at 0. For $q>\max\{p,2\}$ or $q=p>2$, if $\|u\|_{L^\infty(\R^2)}\leq C_0$, then $u$ satisfies the following asymptotic estimates at $R\gg 1$
\[
\inf_{\abs{z_0}=R}\sup_{\abs{z-z_0}<1} \abs{u(z)} \geq e^{-CR^{1-\frac{2}{q}}\log R},
\]
where $C>0$ depends only on $p$, $q$, $\tilde{M}$ and $C_0$. When $q=\max\{p,2\}$ and $p\in (1,2]$, if $\abs{u(z)}\leq \abs{z}^m$ for $\abs{z} >1$ with some $m>0$, then we have
\[
\inf_{\abs{z_0}=R} \sup_{\abs{z-z_0}<1} \abs{u(z)} \geq C_1 e^{-C_2(\log R)^2},
\]
where $C_1>0$ depends only on $m, p$ and $C_2>0$ depends on $m, p, \tilde{M}$. As an immediate consequence, we obtain the strong unique continuation principle (SUCP) for nontrivial solutions of this equation. We also prove the SUCP for the weighted $p$-Laplace equation with a locally positive locally Lipschitz weight.
\end{abstract}

\maketitle

\section{Introduction}\label{sec:introduction}

In this paper, we study the unique continuation principle (UCP) for certain nonlinear elliptic partial differential equations in the plane. This principle, which states that any solution of an elliptic equation that vanishes in a small ball must be identically zero, is a fundamental property that has various applications e.g. in solvability questions, inverse problems, and control theory. 

This problem has significant differences for linear and nonlinear equations. For linear elliptic PDEs, Garofalo and Lin~\cite{Garofalo:Lin1986,Garofalo:Lin1987} have obtained even the strong unique continuation principle (SUCP) for solutions in all dimensions. As for nonlinear elliptic PDEs, very little is known. As a matter of fact, it remains unknown even for the $p$-Laplace equations in higher dimensions; see~\cite{Armstrong:Silvestre2011,Granlund:Marola2014} and the references therein for partial positive results in higher dimensions. The difficulty in establishing such a unique continuation result for nonlinear equations (such as the $p$-Laplace equation) lies in the fact that the best a priori regularity available for (viscosity or weak) solutions of the PDE is $C^{1,\alpha}$ and consequently, one can not linearize the corresponding nonlinear equation and apply the known unique continuation results for linear equations. 

When restricted to the planar case, the situation is slightly different. For instance, the UCP holds for solutions of the $p$-Laplace equations or even more general nonlinear elliptic equations in the plane; see~\cite{Bojarski:Iwaniec:1987,Granlund:Marola2012,Heinonen:Kilpelainen:Martio:1993}. The main reason for this is that planar nonlinear elliptic equations are naturally tied with the complex function theory. In the linear equation case, the real part and the complex part of an analytic function is harmonic and each harmonic function (locally) raises as the real part of an analytic function. In the nonlinear $p$-Laplace equation case ($1<p<\infty$), as discovered by Bojarski and Iwaniec~\cite{Bojarski:Iwaniec:1987}, the complex gradient of a $p$-harmonic function is a quasiregular mapping. The nice properties of quasiregular mappings in return yield many surprising results for planar $p$-harmonic functions, in particular, the UCP holds for these functions.

We will mainly consider the SUCP for two variants of the $p$-Laplace equations in this paper. The first one is the following $p$-Laplace equation, $1<p<\infty$, with a locally Lipschitz drift term
\begin{equation}\label{eq:main equation}
	\text{div}(\abs{\nabla u}^{p-2} \nabla u) + W\cdot(\abs{\nabla u}^{p-2}\nabla u) = 0 \ \text{ in } \mathbb{R}^2,
\end{equation}
where $W= (W_1, W_2)$ is a real vector-valued locally Lipschitz function with bounded $L^q$-norm for $\max\{2,p\}\leq q<\infty$. As we will see in a moment, the SUCP for solutions of \eqref{eq:main equation} will be a simple consequence of certain lower estimate of the decay rate under certain a priori assumptions for the solutions. The decay estimate will also yield quantitative uniqueness estimates of the solutions at large scale.

This kind of quantitative decay estimate problem was originally posed by Landis in the 60's \cite{Land1988}. He conjectured that if $u$ is a bounded solution of
\begin{equation}\label{eq:landis}
	\dive(\nabla u)+Vu=0\ \text{in}\ \R^n,
\end{equation}
with $\|V\|_{L^\infty(\R^n)}\leq 1$ and $|u(x)|\leq Ce^{-C|x|^{1+}}$, then is identically zero. This conjecture was disproved by Meshkov~\cite{M1991}, who constructed a $V(x)$ and a nontrivial $u(x)$ with $|u(x)|\leq C e^{-C|x|^{\frac{4}{3}}}$ satisfying \eqref{eq:landis}. He also showed that if $|u(x)|\leq C e^{-C|x|^{\frac{4}{3}+}}$, then $u\equiv 0$ in $\R^n$. A quantitative form of Meshkov's result was derived by Bourgain and Kenig \cite{Bourgain:Kenig:2005} in their resolution of Anderson localization for the Bernoulli model in higher dimensions. It should be noted that both $V$ and $u$ constructed by Meshkov are complex-valued functions; see also~\cite{D2014,Lin:Wang:2014} for positive quantitative results along this direction. 

In a recent paper of Kenig--Silvestre--Wang \cite{Kenig:Silvestre:Wang:2015}, the authors studied Landis' conjecture for second order elliptic equations in the plane in the real setting, including a special case of~\eqref{eq:main equation}. In particular, it was proved there that if $u\in W^{2,q}_{loc}(\R^2)$ is a real-valued solution of \eqref{eq:main equation} with $p=2$, $q=\infty$ such that $|u(z)|\leq e^{C_0\abs{z}}$, $|\nabla u(0)|=1$, and $\|W\|_{L^\infty(\R^2)}\leq 1$, then
\begin{align}\label{eq:KSW main result}
	\inf_{|z_0|=R}\sup_{|z-z_0|<1}|u(z)|\geq e^{-CR\log R}\ \quad \text{for } R\gg 1,
\end{align}
where the constant $C$ depends only on $C_0$. Later, Kenig and Wang \cite{Kenig:Wang:2015} obtained similar estimates as~\eqref{eq:KSW main result} for the equation~\ref{eq:main equation} with $p=2$ under an $L^q$-boundedness assumption for $W$ for $2\leq q<\infty$. To be more precise, let $u\in W^{2,q}_{loc}(\R^2)$ be a real solution of~\eqref{eq:main equation} with $p=2$. Suppose $2<q<\infty$. If $|u(z)|\leq C_0$, $|\nabla u(0)|=1$, $\|W\|_{L^q(\R^2)}\leq \tilde{M}$, then 
\begin{align}\label{eq:KW main result}
\inf_{|z_0|=R}\sup_{|z-z_0|<1}|u(z)|\geq e^{-CR^{1-\frac{2}{q}}\log R}\ \quad \text{for } R\gg 1,
\end{align}
where $C$ depends only on $q$, $\tilde{M}$ and $C_0$. Similar quantitative estimates (but of different order) hold for the case $q=2$ as well.

Our first main result is the following quantitative lower bound decay for nontrivial solutions of~\eqref{eq:main equation} that generalizes the above-mentioned result of Kenig and Wang~\cite{Kenig:Wang:2015}.
\begin{theorem}\label{thm:main thm}
Let $u\in W_{loc}^{1,p}(\mathbb{R}^2)$, $1<p<\infty$, be a weak solution of \eqref{eq:main equation} with $W$ being locally Lipschitz. 
\begin{description}
	\item[i). $q > \max \{2,p\}$] If $\|u\|_{L^\infty(\R^2)}\leq C_0$, $\abs{\nabla u(0)} = 1$, and $\|W\|_{L^q(\mathbb{R}^2)}\leq \tilde{M}$,
	then
	\begin{align}\label{eq:main estimate}
		\inf_{\abs{z_0}=R}\sup_{\abs{z-z_0}<1} \abs{u(z)} \geq e^{-CR^{1-\frac{2}{q}}\log R}
	\end{align}
	for $R\gg 1$, where $C$ depends only on $p$, $q$, $\tilde{M}$ and $C_0$.
	
	\item[ii). $q=\max\{2,p\}$ and $p>2$]If $\|u\|_{L^\infty(\R^2)}\leq C_0$, $\abs{\nabla u(0)} = 1$, and $\|W\|_{L^p(\mathbb{R}^2)}\leq \tilde{M}$,
	then
	\begin{align}\label{eq:main estimate 3}
	\inf_{\abs{z_0}=R}\sup_{\abs{z-z_0}<1} \abs{u(z)} \geq e^{-CR^{1-\frac{2}{p}}\log R}
	\end{align}
	for $R\gg 1$, where $C$ depends only on $p$, $\tilde{M}$ and $C_0$.
	
	In addition, if we assume $\abs{u(z)}\leq \abs{z}^m$ for $\abs{z}>1$, with some $m>0$, then the similar estimates as \eqref{eq:main estimate} and \eqref{eq:main estimate 3} are also true.
	\item[iii). $q=\max\{2,p\}$ and $1<p\leq 2$]If we assume $\|u\|_{L^\infty(\R^2)}\leq C_0$, $\|\nabla u\|_{L^p(B_1)}\geq 1$, and  $\|W\|_{L^2(\mathbb{R}^2)} \leq \ \tilde{M}$, then
	\begin{equation}\label{eq:main estimate 2}
	\inf_{\abs{z_0}=R} \sup_{\abs{z-z_0}<1} \abs{u(z)} \geq R^{-C},
	\end{equation}
	for $R\gg1$, where $C>0$ depends on $p$, $\tilde{M}$ and $C_0$.
	
	Moreover, if $\abs{u(z)}\leq \abs{z}^m$ for $\abs{z}>1$, with some $m>0$, $\|\nabla u\|_{L^p(B_1)}\geq 1$, and  $\|W\|_{L^2(\mathbb{R}^2)} \leq \ \tilde{M}$, then
\begin{equation}\label{eq:main estimate 4}
\inf_{\abs{z_0}=R} \sup_{\abs{z-z_0}<1} \abs{u(z)} \geq C_1 e^{-C_2(\log R)^2},
\end{equation}
where $C_1>0$ depends only on $m, p$ and $C_2>0$ depends on $m, p, \tilde{M}$.
\end{description}

\end{theorem}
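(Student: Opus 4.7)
The plan is to reduce the scalar $p$-Laplace equation \eqref{eq:main equation} to a first-order Beltrami-type system for the complex gradient, and then to exploit a Stoilow factorization combined with quantitative Cauchy-transform estimates, in the spirit of Bojarski--Iwaniec \cite{Bojarski:Iwaniec:1987} and Kenig--Silvestre--Wang \cite{Kenig:Silvestre:Wang:2015}. Using the Wirtinger derivatives $\partial = \tfrac12(\partial_x - i\partial_y)$ and $\bar\partial = \tfrac12(\partial_x + i\partial_y)$ and writing $f$ for the complex gradient $|\nabla u|^{p-2}\partial u$ (or, for $p=2$, simply $\partial u$), a direct computation from \eqref{eq:main equation} produces an equation of the form
\[
\bar\partial f \;=\; \mu(z,f)\, \partial f \;+\; \nu(z,f)\, \overline{\partial f} \;+\; \alpha(z)\, f \;+\; \beta(z)\, \bar f,
\]
where $\mu,\nu$ are measurable with $|\mu|+|\nu| \le k(p) < 1$ encoding the $p$-Laplace nonlinearity, and $\alpha,\beta$ are linear in the drift $W$. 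Thus $f$ behaves like a quasiregular map (with ellipticity depending only on $p$) perturbed by a $W$-dependent lower-order term, and the zero set of $u$ corresponds to the zero set of $f$.

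Next I would apply the Stoilow factorization to write $f = F\circ\chi$, where $\chi:\C\to\C$ is a $k(p)$-quasiconformal homeomorphism normalized near $0$ and $F$ satisfies a purely lower-order equation $\bar\partial F = \tilde\alpha F + \tilde\beta\bar F$, with $\tilde\alpha,\tilde\beta$ controlled by $W\circ\chi^{-1}$ together with the Jacobian of $\chi^{-1}$. The similarity principle then provides $F = e^{\phi}h$ on a large disk, with $h$ holomorphic and $\phi$ essentially the Cauchy transform of a function built from $\tilde\alpha,\tilde\beta$. The crucial quantitative step is to estimate $\phi$ on $B_R$: H\"older's inequality combined with the sharp area distortion of quasiconformal maps yields $\|\phi\|_{L^\infty(B_R)} \lesssim \tilde M\, R^{1-2/q}$ when $q>\max\{p,2\}$ or when $q=p>2$, and only the logarithmic bound $\|\phi\|_{L^\infty(B_R)} \lesssim \log R$ in the endpoint case $q=2$, $p\in(1,2]$.

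With the representation $F = e^{\phi}h$ in hand, I would translate the a priori data: the normalization $|\nabla u(0)|=1$, or its substitute $\|\nabla u\|_{L^p(B_1)}\ge 1$, yields a lower bound $|h(w_0)|\ge c$ at a point, while the global bound $\|u\|_{L^\infty}\le C_0$ (or the polynomial growth $|u(z)|\le |z|^m$) yields an upper bound for $|h|$ on the $\chi$-image of $B_R$ via a Caccioppoli-type estimate for $\nabla u$ and the H\"older/area-distortion properties of $\chi$. The Hadamard three-circle theorem applied to $h$ then provides the required lower bound for $\sup_{|z-z_0|<1}|f(z)|$ at $|z_0|=R$, and this propagates to $\sup_{|z-z_0|<1}|u(z)|$ by integrating $f$ along a short curve joining the supremum point to a zero of $u$ (or by a Poincar\'e-type inequality on a small ball). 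Inserting the three regimes for $\|\phi\|_{L^\infty(B_R)}$ into the three-circle output then produces \eqref{eq:main estimate}, \eqref{eq:main estimate 3}, \eqref{eq:main estimate 2}, and \eqref{eq:main estimate 4} in turn.

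I expect the main obstacle to be twofold. First, the $p$-Laplace operator is degenerate at critical points of $u$, so the passage from the scalar equation to a Beltrami equation with genuinely bounded coefficients and an honestly lower-order $W$-term must go through the Bojarski--Iwaniec complex-gradient device rather than a naive linearization, and care is needed when transferring decay of $f$ back to decay of $u$ for $p\neq 2$. Second, in the borderline regime $q=2$, $p\in(1,2]$, only a logarithmic bound for $\phi$ is available; this forces the weaker $R^{-C}$ and $e^{-C(\log R)^2}$ estimates \eqref{eq:main estimate 2} and \eqref{eq:main estimate 4}, and also explains why the pointwise normalization $|\nabla u(0)|=1$ must be replaced by the integral normalization $\|\nabla u\|_{L^p(B_1)}\ge 1$ in order to absorb the $\log R$ losses and still initiate the three-circle argument.
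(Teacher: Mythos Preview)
Your overall strategy---reduce to a Beltrami-type system for a power of the complex gradient, strip off the lower-order $W$-term via a similarity principle, and feed the resulting holomorphic factor into Hadamard's three-circle theorem---is essentially the paper's approach. Two organizational differences are worth flagging. First, the paper does not work globally on $B_R$: it uses the Bourgain--Kenig rescaling $v(z)=u(Rz+z_0)$ to pull everything back to $B_8$ with drift $A$ satisfying $\|A\|_{L^q(B_8)}\le \tilde M R^{1-2/q}$, so that the $R$-dependence enters only through the constant $M$ in a purely local vanishing-order estimate (Theorems~\ref{CaseiThe} and~\ref{QUCP_q=2}). Second, the paper reverses your order of factorization: it first removes the lower-order term via the Cauchy transform (so that $Fe^{-\omega}$ solves a \emph{pure} Beltrami equation) and only then applies Stoilow, obtaining $F=(h\circ\phi)e^{\omega}$ on $B_8$. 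This avoids having to push $W$ through a global quasiconformal change of variables, and hence sidesteps the area-distortion bookkeeping you anticipate. One further minor point: the correct Bojarski--Iwaniec substitution is $F=|G|^{(p-2)/2}G$ with $G=u_x-iu_y$, not $|G|^{p-2}G$; the exponent $(p-2)/2$ is what puts $F$ in $W^{1,2}_{loc}$ and keeps $|q_1|+|q_2|<1$.

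There is, however, a genuine gap in your treatment of the endpoint case $q=2$, $1<p\le 2$. The Cauchy transform of an $L^2$ function is \emph{not} bounded---it lands only in $W^{1,2}$ (equivalently BMO)---so your claimed estimate $\|\phi\|_{L^\infty(B_R)}\lesssim \log R$ is false in general, and you cannot simply insert a pointwise bound for $e^{\phi}$ into the three-circle argument. The paper handles this (Theorem~\ref{QUCP_q=2}) by using the $W^{1,2}$ bound $\|\omega\|_{W^{1,2}(B_8)}\le CM$ together with a Trudinger-type inequality to get the averaged exponential integrability
\[
\frac{1}{|B_r|}\int_{B_r} e^{2|\omega|}\;\le\; C\, r^{-CM}\, e^{CM^2},
\]
and then couples this with an $L^1$--$L^\infty$ Harnack estimate for the quasiregular map $h\circ\phi$ (Lemma~\ref{lemma:upper Harnack}) in place of the pointwise control you assume. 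The extra factor $r^{-CM}$ that emerges here is exactly what produces the weaker conclusions \eqref{eq:main estimate 2} and \eqref{eq:main estimate 4} after setting $r=R^{-1}$. You correctly anticipate the form of the endpoint conclusions and the need for the integral normalization $\|\nabla u\|_{L^p(B_1)}\ge 1$, but you misidentify the mechanism that forces them.
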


Note that since we are dealing with the nonlinear equation~\eqref{eq:main equation}, the a priori assumption $u\in W^{2,q}_{loc}(\R^2)$ from~\cite{Kenig:Wang:2015} has to be replaced with the substantially weaker assumption $u\in W^{1,q}_{loc}(\R^2)$. For technical reasons, we have also assumed that the drift $W$ is locally Lipschitz and we believe that it is sufficient to assume that $W$ is locally bounded; see the discussion in Section~\ref{sec:concluding remark}.

In the formulation of \textbf{i)} and \textbf{ii).} in Theorem~\ref{thm:main thm}, we have an a priori global $L^\infty$-boundedness assumption on the solution $u$. This assumption can be weakened to the pointwise boundedness assumption as in \textbf{iii)} of Theorem~\ref{thm:main thm}: see Theorem~\ref{mainpq2} below. As a matter of fact, upon changing the form of the lower bound in Theorem~\ref{thm:main thm}, one can impose any a priori upper growth condition on the solution.

As in \cite{Kenig:Wang:2015}, a key reduction we will use in this paper is the scaling argument introduced by Bourgain and Kenig \cite{Bourgain:Kenig:2005}. It allows us to reduce our problem to certain estimate of the maximal vanishing order of the solution $v$ to the equation
\begin{equation}\label{eq:reduced equation}
\dive(\abs{\nabla v}^{p-2} \nabla v) + A\cdot(\abs{\nabla v}^{p-2}\nabla v) = 0 \ \text{ in } B_8,
\end{equation}
with $A= (A_1, A_2)$ being Lipschitz and $\|A\|_{L^q(B_8)} \leq M$. The proof of the maximal vanishing order of $v$ relies on a nice reduction of \eqref{eq:reduced equation} to a quasilinear Beltrami equation. Using the well-known theory of Beltrami operators, we may represent the solutions of the Beltrami equation via quasiregular mappings (cf.~Theorem \ref{ThpLLL}). Unlike the linear case $p=2$ in \cite{Kenig:Wang:2015},  we need a version of the Hadamard's three circle theorem for quasiregular mappings (cf.~Proposition \ref{prop:key prop}) to derive the vanishing order. The
case $q=\max\{2,p\}$ needs special attention due to the fact that the Cauchy transform
fails to be a bounded mapping from $L^2$ to $L^\infty$.

As in the linear case \cite{Kenig:Wang:2015}, a quantitative form of the estimate of the maximal vanishing order of $v$ provides us the strong unique continuation property (SUCP) for \eqref{eq:reduced equation}. 

\begin{theorem}\label{thm:SUCP}
	Let $\Omega\subset \R^2$ be a domain. Let $v\in W_{loc}^{1,p}(\Omega), 1<p<\infty,$ be a weak solution of
	\[
	\dive(\abs{\nabla v}^{p-2}\nabla v) + A\cdot(\abs{\nabla v}^{p-2}\nabla v) = 0 \ \text{in}\ \Omega,
	\]
	where $A$ is locally Lipschitz in $\Omega$. If for some $z_0\in \Omega$ and for all $N\in \mathbb{N}$, there exist $C_N>0$ and $r_N>0$ such that
	\[
	\abs{v(z)-v(z_0)} \leq C_N\abs{z-z_0}^N, \ \forall \ \abs{z-z_0}<r_N,
	\]
	then
	\[
	v(z) \equiv v(z_0).
	\]
	
\end{theorem}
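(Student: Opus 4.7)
After translating I may assume $z_0=0$, and since the equation depends only on $\nabla v$, replacing $v$ by $v-v(0)$ reduces the statement to the local claim: if $v(0)=0$ and $v$ vanishes to infinite order at $0$, then $v\equiv 0$ on some neighborhood of $0$. Once this local claim is known, the conclusion $v\equiv v(z_0)$ on the connected domain $\Omega$ follows from the weak unique continuation principle for planar quasilinear elliptic equations (see \cite{Bojarski:Iwaniec:1987,Granlund:Marola2012,Heinonen:Kilpelainen:Martio:1993}), whose proof extends to \eqref{eq:reduced equation} via the same Stoilow-type representation used below.

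\textbf{Local argument via the Beltrami reduction.} Work on a small disk $B_R\Subset\Omega$ centered at $0$ and form the complex gradient $f=|\nabla v|^{p-2}(v_x-i v_y)$. By Theorem~\ref{ThpLLL}, $f$ satisfies a quasilinear Beltrami-type equation whose coefficients are controlled by $A$, and admits a Stoilow factorization $f(z)=e^{g(z)}\,\Phi(\chi(z))$ with $\chi$ a $K$-quasiconformal self-homeomorphism of $B_R$ fixing $0$, $g$ bounded, and $\Phi$ holomorphic on $\chi(B_R)$. The hypothesis $|v(z)|\le C_N|z|^N$, combined with the scaled Caccioppoli inequality and the interior gradient estimate applied on the balls $B_{|z|/4}(z)$, promotes to $|\nabla v(z)|\le C_N'|z|^{N-1}$ for every $N\in\mathbb{N}$, so $f$ itself vanishes to infinite order at $0$. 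Since $\chi$ is H\"older-continuous in both directions with $\chi(0)=0$ and $e^g$ is bounded above and bounded away from $0$, $\Phi$ inherits infinite-order vanishing at $0$; being holomorphic on a connected open set, $\Phi\equiv 0$ on $\chi(B_R)$. Unwinding the factorization gives $f\equiv 0$, hence $\nabla v\equiv 0$ on $B_R$, so $v\equiv v(0)=0$ on $B_R$, proving the local claim.

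\textbf{Main obstacle.} The delicate step is upgrading infinite-order vanishing of $v$ to that of $\nabla v$: weak solutions of \eqref{eq:reduced equation} are only known to be $C^{1,\alpha}$ and the drift $A$ is merely Lipschitz, so term-by-term differentiation of a Taylor expansion is unavailable and Schauder bootstrapping cannot be iterated indefinitely. A rescaling argument resting on the Caccioppoli inequality circumvents this cleanly, but its implementation must respect the nonlinear structure of the equation; when one dilates on balls $B_{|z|/4}(z)$ the rescaled equations must remain of the same form with uniformly controlled coefficients, which is precisely where the local Lipschitz hypothesis on $A$ (rather than mere boundedness) enters. A secondary point is verifying that the holomorphic factor $\Phi$ produced by Theorem~\ref{ThpLLL} is genuinely holomorphic and not merely quasiregular, so that the classical identity principle applies directly; if only quasiregularity is available, the qualitative form of Proposition~\ref{prop:key prop} serves as a substitute.
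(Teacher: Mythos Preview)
Your approach is correct but takes a different route from the paper. The paper first proves the quantitative vanishing-order estimate Theorem~\ref{CaseiThe}: under the normalization $\sup_{B_1}|\nabla v|\ge 1$ and $\|v\|_{L^\infty(B_8)}\le C_0$, one has $\|v\|_{L^\infty(B_r)}\ge C_0^{C_1}r^{C_2\log C_0}r^{C_3M}$, obtained from the same Stoilow factorization $F=h\circ\phi\cdot e^{\omega}$ but combined with the Hadamard three-circle theorem for quasiregular mappings (Proposition~\ref{prop:key prop}) rather than the bare identity principle. The SUCP then drops out by contradiction: if $v\not\equiv v(0)$ on $B_1$, then $\sup_{B_1}|\nabla v|>0$, and after normalizing, the lower bound on $\|v\|_{L^\infty(B_r)}$ directly forbids infinite-order vanishing of $v$---no upgrade from $v$ to $\nabla v$ is ever performed. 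Your argument is more elementary for the qualitative SUCP alone (no three-circle machinery), at the cost of needing the upgrade step; the paper's route yields the quantitative vanishing order as a byproduct, and that estimate is the real engine behind Theorem~\ref{thm:main thm}. Two small corrections: the object to which Theorem~\ref{ThpLLL} applies is $F=|\nabla v|^{(p-2)/2}(v_x-iv_y)$, not $|\nabla v|^{p-2}(v_x-iv_y)$; and the local Lipschitz hypothesis on $A$ is not what makes your rescaling work (the rescaled drift $rA(z_0+r\,\cdot)$ is already uniformly bounded once $A\in L^\infty_{loc}$)---rather, Lipschitz regularity of $A$ enters through Lemma~\ref{lemma:regularity of F}, which guarantees $F\in W^{1,2}_{loc}$ so that the Beltrami equation and hence Theorem~\ref{ThpLLL} are available in the first place.
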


The second class of equations we are considering in this paper is the following weighted $p$-Laplace equation in a planar domain $\Omega$ for $1<p<\infty$:
\begin{equation}\label{eq:weighted p laplace}
	\dive(\sigma \abs{\nabla u}^{p-2} \nabla u) = 0 \ \text{ in } \Omega,
\end{equation}
where $\sigma\in {W_{loc}^{1,\infty}(\Omega)}$ is a locally positive locally Lipschitz continuous function. Here by saying that $\sigma$ is locally positive\footnote{This terminology is not standard. The term ``locally positive" is mainly used to distinguish the term ``positive" in inverse problems, which usually means a uniform positive lower bound (cf.~\cite{Guo:Kar:Salo:2015}).}, we mean that for each $K\subset\subset \Omega$,  there exists a positive constant $c_K$ such that $\sigma>c_K$ in $K$. The main result is the following SUCP for solutions of~\eqref{eq:weighted p laplace}. Note that the UCP for solutions of~\eqref{eq:weighted p laplace} are well-known; see e.g.~\cite{Alessandrini:1987,Manfredi:1988} for the constant conductivity $\sigma$, and \cite{Granlund:Marola2012,Guo:Kar:Salo:2015} for more general case.

\begin{theorem}\label{thm:SUCP weighted p-la}
Let $\Omega\subset\mathbb{R}^2$ be a domain and $\sigma\in W_{loc}^{1,\infty}(\Omega)$ a locally positive locally Lipschitz weight. Let $v\in W_{loc}^{1,p}(\Omega)$ be a weak solution of the weighted $p$-Laplace equation \eqref{eq:weighted p laplace} with  $1<p<\infty$. If for some $z_0\in \Omega$ and for all $N\in \mathbb{N}$, there exist $C_N>0$ and $r_N>0$ such that
\[
\abs{v(z)-v(z_0)} \leq C_N\abs{z-z_0}^N, \ \forall \ \abs{z-z_0}<r_N,
\]
then
\[
v(z) \equiv v(z_0).
\]
\end{theorem}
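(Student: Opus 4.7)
The plan is to reduce Theorem~\ref{thm:SUCP weighted p-la} to Theorem~\ref{thm:SUCP}. Since the statement is purely local at $z_0$, I can restrict attention to a ball $B \subset\subset \Omega$ on which $\sigma$ is Lipschitz and satisfies $\sigma \geq c > 0$. On $B$, the distributional product rule (legitimate since $\sigma \in W^{1,\infty}(B)$) gives
\[
0 = \dive(\sigma\abs{\nabla v}^{p-2}\nabla v) = \sigma\,\dive(\abs{\nabla v}^{p-2}\nabla v) + \nabla\sigma\cdot(\abs{\nabla v}^{p-2}\nabla v),
\]
so that, after dividing by $\sigma$, $v$ is a weak solution of
\[
\dive(\abs{\nabla v}^{p-2}\nabla v) + A\cdot (\abs{\nabla v}^{p-2}\nabla v) = 0 \quad\text{in}\ B, \qquad A := \frac{\nabla\sigma}{\sigma} = \nabla\log\sigma.
\]
Rigorously, this is seen by inserting the admissible test function $\varphi = \sigma^{-1}\psi$, with $\psi \in C_c^\infty(B)$, into the weak formulation of~\eqref{eq:weighted p laplace}; expanding $\nabla\varphi = \sigma^{-1}\nabla\psi - \sigma^{-2}(\nabla\sigma)\psi$ produces exactly the weak form of the drift equation.

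Once $v$ solves the drift equation on $B$, the infinite-order vanishing hypothesis on $v - v(z_0)$ at $z_0$ carries over unchanged, and Theorem~\ref{thm:SUCP} yields $v \equiv v(z_0)$ on $B$. A standard connectedness argument (the set $\{z \in \Omega : v(z)=v(z_0)\}$ is open by the local statement just established and closed by continuity) then extends the conclusion to all of $\Omega$.

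The principal obstacle is a regularity mismatch: Theorem~\ref{thm:SUCP} requires $A$ to be locally Lipschitz, whereas our $A = \nabla\log\sigma$ only lies in $L^\infty_{loc}(\Omega)$ (with $\nabla\sigma \in L^\infty_{loc}$ and $\sigma$ locally bounded below). Consistent with the authors' remark in Section~\ref{sec:concluding remark} that the Lipschitz assumption on the drift is likely superfluous, the cleanest resolution is to revisit the two ingredients behind Theorem~\ref{thm:SUCP}, namely the reduction of the drift equation to a quasilinear Beltrami equation and the Hadamard three-circle-type statement Proposition~\ref{prop:key prop}, and to verify that they survive the relaxation from locally Lipschitz to locally bounded $A$; this should require only minor book-keeping, since what actually enters the Beltrami reduction and the subsequent vanishing-order estimate is an $L^q$ bound for $A$ rather than pointwise Lipschitz control. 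An alternative route via mollifying $\sigma$ is awkward, since $v$ no longer satisfies the weighted equation against the mollified weight, forcing one to construct and pass to the limit in an auxiliary sequence of solutions.
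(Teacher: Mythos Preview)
Your reduction of the weighted equation to a drift equation with $A = \nabla\log\sigma$ is natural, but the paper takes a different and more direct route precisely to sidestep the regularity obstacle you identify. Rather than converting to a drift equation, the paper keeps the weight inside the complex gradient: it sets $G = A v_x - iA v_y$ (not $v_x - iv_y$), $F = |G|^{(p-2)/2}G$, and derives a quasilinear Beltrami equation for this $F$ directly (equation~\eqref{eq:for Fa}). The coefficient $q_3$ then involves $\partial_z(1/A)$, $\partial_{\bar z}(1/A)$ and the analogous derivatives of $1/A^{p-2}$, all of which are bounded because $A$ itself is Lipschitz and bounded below. Hence the full machinery --- the representation of Theorem~\ref{ThpLLL}, the three-circle estimate of Proposition~\ref{prop:key prop}, and the Caccioppoli inequality of Lemma~\ref{cacciWE2} --- goes through verbatim to yield Theorem~\ref{thm:maximal vanishing order}, from which the SUCP follows exactly as in the proof of Theorem~\ref{thm:SUCP}.

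Your route, by contrast, runs into a genuine wall. The drift $A = \nabla\log\sigma$ lies only in $L^\infty_{loc}$, and the Lipschitz hypothesis on the drift is used in Lemma~\ref{lemma:regularity of F} to establish $F \in W^{1,2}_{loc}$ --- without which the Beltrami equation for $F$ cannot even be written down in the weak sense. Your claim that relaxing to $A \in L^\infty_{loc}$ ``should require only minor book-keeping'' is too optimistic: the paper itself (Section~\ref{sec:concluding remark}) flags exactly this relaxation as an open issue, sketching a mollification-and-limit heuristic but not carrying it out. So the gap you acknowledge is not a technicality to be patched but precisely the difficulty the paper's direct argument is designed to circumvent, by exploiting that one derivative of the \emph{weight} is bounded, even though one derivative of the \emph{induced drift} need not be.
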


One can also prove a version of Theorem~\ref{thm:main thm} for the weighted $p$-Laplace equation~\eqref{eq:weighted p laplace} and deduce Theorem~\ref{thm:SUCP weighted p-la} as an immediate consequence of that: see Remark~\ref{rmk:on QUCP for p-Laplace type equations} below. As our interest lies in the SUCP for equation~\eqref{eq:weighted p laplace}, we do not formulate our theorem in that fashion. 

The main idea of the proof relies on similar localizing arguments as in the previous case. Namely, it suffices to consider our problem in the disk $B_8$.
We will show that the estimate of the maximal vanishing order of the solution $v$, Theorem \ref{thm:maximal vanishing order}, to the equation
\begin{equation}\label{eq11}
\dive(A \abs{\nabla v}^{p-2} \nabla v) = 0 \ \text{ in } \ B_8,
\end{equation}
with $A\in W_{loc}^{1,\infty}(B_8)$, being a positive Lipschitz coefficient, is sufficient to establish Theorem \ref{thm:SUCP weighted p-la}. As in the previous case ($p$-Laplace with a locally Lipschitz drift), we will reduce the original equation~\eqref{eq11} to certain quasilinear Beltrami equation \eqref{eq:for Fa}. Then the derivation of the estimate of the maximal vanishing order for $v$ will follow from the explicit representation of solutions of the Beltrami equation, the appropriate use of Caccioppoli's inequality for the weighted $p$-Laplace equation, and the Hadamard's three circles theorem for quasiregular mappings. When $1<p\leq 2$, the local Lipschitz regularity assumption on $\sigma$ can be weakened and we discuss this improvement in Section \ref{subsec:weaken the regularity}.

This paper is organized as follows. Section~\ref{sec:Partial regularity of weak solutions} contains the basic definition of weak solutions to~\eqref{eq:main equation} and some partial regularity result of the gradient of solutions for the equation~\eqref{eq:main equation}. In Section \ref{sec:q>p 2}, we consider the case $q>\max\{p,2\}$ for the proof of Theorem \ref{thm:main thm} and the case $q=\max\{p,2\}$ is treated in Section \ref{sec:q=p 2}. Section \ref{sec:weighted p laplace} contains the proof of Theorem~\ref{thm:SUCP weighted p-la}. The final section, Section \ref{sec:concluding remark} contains a short remark on reducing the local regularity assumption in Theorem~\ref{thm:main thm}. Throughout the paper, $C$ stands for an absolute constant whose dependence will be specified if necessary. Its value may vary from line to line.

\subsection*{Acknowledgements}

C.Y.\ Guo was supported by the Finnish Cultural Foundation--
Central Finland Regional Fund (grant number 30151735). M.\ Kar\ was partly supported by an ERC Starting Grant (grant agreement no 307023) and by the Academy of Finland through the Centre of Excellence in Inverse Problems Research. We would like to thank Prof.~Tero Kilpel\"ainen, Prof.~Kari Astala, Prof.~Xiao Zhong and Dr.~Chang-Lin Xiang for their helpful suggestions.

\section{Partial regularity of weak solutions}\label{sec:Partial regularity of weak solutions}
Throughout this paper, $\Omega\subset\mathbb{R}^2$ will be a domain, i.e., an open connected set in $\R^2$. A real function $u\in W^{1,p}_{loc}(\Omega)$ is said to be a weak solution of~\eqref{eq:main equation} if 
\begin{align}\label{eq:def for weak solution}
 \int_{\Omega}|\nabla u|^{p-2}\nabla u\cdot \nabla \eta dx=\int_{\Omega}W\cdot(|\nabla u|^{p-2}\nabla u)\eta dx	
\end{align}
for all $\eta\in C_0^\infty(\Omega)$. Note that $W\in L^\infty_{loc}(\Omega)$ and $u\in W^{1,p}_{loc}(\Omega)$, the right-hand side of~\eqref{eq:def for weak solution} is integrable. (As a matter of fact, since $|\nabla u|^{p-2}\nabla u\in L^{p/(p-1)}_{loc}(\Omega)$, it suffices to assume $W\in L^s_{loc}(\Omega)$ for $s$ bigger than the dual exponent of $p/(p-1)$). Moreover, it is clear from our assumption that if $u\in W^{1,p}_{loc}(\Omega)$ is a weak solution of \eqref{eq:def for weak solution}, then \eqref{eq:def for weak solution} holds for all $\eta\in W^{1,p}_0(\Omega)$. 

If $u\in W^{1,p}_{loc}(B_8)$ is a weak solution of~\eqref{eq:def for weak solution}, we consider the mapping $F=|\nabla u|^{(p-2)/2}\nabla u$. It is clear that $F\in L^2_{loc}(B_8)$. When $W$ is locally Lipschitz, the next lemma implies that $F$ enjoys higher regularity. The proof is similar to that used in~\cite[Theorem 16.3.1]{Astala:Iwaniec:Martin}. 
\begin{lemma}\label{lemma:regularity of F}
Assume that $W$ is locally Lipschitz in $B_8$. Then	$F\in W^{1,2}_{loc}(B_8,\R^2)$.
\end{lemma}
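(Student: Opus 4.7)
The plan is to run Nirenberg's difference-quotient method directly on the weak formulation~\eqref{eq:def for weak solution}, mimicking \cite[Theorem~16.3.1]{Astala:Iwaniec:Martin} while carrying through the additional contributions produced by the drift $W$. Fix a subdisk $B_\rho\subset\subset B_8$ and a cutoff $\varphi\in C_c^\infty(B_\rho)$ equal to $1$ on the next smaller disk. Write $A(\xi):=|\xi|^{p-2}\xi$ (so $A(\nabla u)\in L^{p/(p-1)}_{loc}$) and $\Phi(\xi):=|\xi|^{(p-2)/2}\xi$ (so $F=\Phi(\nabla u)$), and let $\tau_h g(x):=(g(x+he_k)-g(x))/h$ denote the difference quotient in direction $k\in\{1,2\}$ for $|h|$ small. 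Testing~\eqref{eq:def for weak solution} at $x$ and at $x+he_k$ against a common $\eta\in C_c^\infty(B_\rho)$, subtracting, and dividing by $h$ produces
\[
\int\tau_h(A(\nabla u))\cdot\nabla\eta\,dx \;=\; \int\bigl[(\tau_h W)\cdot A(\nabla u)(\cdot+he_k)+W\cdot\tau_h(A(\nabla u))\bigr]\eta\,dx.
\]

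The next step is to substitute $\eta=\varphi^2\tau_h u$, expand $\nabla\eta=2\varphi\nabla\varphi\,\tau_h u+\varphi^2\tau_h\nabla u$, and apply the classical monotonicity inequality
\[
\langle A(\xi)-A(\eta),\,\xi-\eta\rangle \;\geq\; c(p)\,|\Phi(\xi)-\Phi(\eta)|^2
\]
to the leading term on the left, which produces the lower bound $c(p)\int\varphi^2|\tau_h F|^2\,dx$. The companion inequality $|A(\xi)-A(\eta)|\leq C(p)(|\xi|+|\eta|)^{(p-2)/2}|\Phi(\xi)-\Phi(\eta)|$ is then used to estimate the commutator term containing $\nabla\varphi$ together with the two drift terms; Young's inequality absorbs a small multiple of $\int\varphi^2|\tau_h F|^2$ back into the left-hand side, and the residual terms are controlled by H\"older's inequality, the uniform bound $|\tau_h W|\leq L$ coming from the Lipschitz continuity of $W$, and the inclusion $\nabla u\in L^p_{loc}$. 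The net output is a bound $\int_{B_{\rho/2}}|\tau_h F|^2\,dx\leq C$ uniform in $h$, which via the standard difference-quotient characterisation of $W^{1,2}$ yields $F\in W^{1,2}_{loc}(B_8,\R^2)$.

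The main obstacle is the degenerate/singular range $1<p<2$, where the factor $(|\nabla u(x)|+|\nabla u(x+he_k)|)^{(p-2)/2}$ entering the Lipschitz-type bound on $|\tau_h A(\nabla u)|$ blows up on the critical set $\{\nabla u=0\}$ and the right-hand side of the Young estimate is not manifestly integrable. The standard remedy is to carry out the entire computation first for the regularised nonlinearity $A_\varepsilon(\xi):=(\varepsilon+|\xi|^2)^{(p-2)/2}\xi$, in which case every quantity in sight is smooth, derive estimates uniform in $\varepsilon>0$, and then pass to the limit $\varepsilon\to 0$, using weak-$L^2$ compactness to identify the limit as the weak gradient of $F$. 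Everything else is routine bookkeeping driven by the Lipschitz bound on $W$.
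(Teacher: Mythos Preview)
Your proposal is correct and follows essentially the same route as the paper: both run Nirenberg's difference-quotient argument with the test function $\eta^2\big(u(\cdot+h)-u\big)$, invoke the monotonicity inequality $\langle A(\xi)-A(\eta),\xi-\eta\rangle\geq c(p)\,|\Phi(\xi)-\Phi(\eta)|^2$ for the principal term, and split the drift contribution into one piece controlled by the Lipschitz bound $|\tau_h W|\le L$ and one piece controlled by the companion estimate for $|A(\xi)-A(\eta)|$. The only difference is in the handling of the singular range $1<p<2$: the paper simply defers to \cite{Astala:Iwaniec:Martin,Bojarski:Iwaniec:1987}, whose direct difference-quotient estimates already cover all $1<p<\infty$, so your $\varepsilon$-regularisation detour---while valid---is not strictly needed.
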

\begin{proof}
	Since most of the argument we are adapting here is similar to~\cite[Proof of Theorem 16.3.1]{Astala:Iwaniec:Martin}, see also~\cite[Proposition 2]{Bojarski:Iwaniec:1987}, we will only outline the main steps and differences, and refer the interested readers to \cite{Astala:Iwaniec:Martin,Bojarski:Iwaniec:1987} for the omitted details.
	
	For a compact set $K\subset B_8$ and choose $h\in \R^2$ such that $|h|<d(K,\partial B_8)$. Note that both functions
	\begin{align*}
		\phi(z)=\eta^2(z)\big(u(z+h)-u(z)\big)\quad \text{and}\quad \phi(z-h)
	\end{align*}
	belong to $W^{1,p}_0(B_8)$. Thus it is legitimate to write the identities
	\begin{align*}
		\int_{B_8}|\nabla u(z)|^{p-2}\nabla u(z)\cdot \nabla \phi(z)dz=\int_{B_8}|\nabla u(z)|^{p-2}W(z)\cdot \nabla u(z)\phi(z)dz
	\end{align*}
	and
	\begin{align*}
	\int_{B_8}|\nabla u(z+h)|^{p-2}\nabla u(z+h)\cdot \nabla \phi(z)dz=\int_{B_8}|\nabla u(z+h)|^{p-2}W(z+h)\cdot \nabla u(z+h)\phi(z)dz.
	\end{align*}
	It follows that
	\begin{align*}
		\int_{B_8}&\langle |\nabla u(z+h)|^{p-2}\nabla u(z+h)-|\nabla u(z)|^{p-2}\nabla u(z), \nabla \phi(z)\rangle dz\tag{I}\\
		&=\int_{B_8} \big(|\nabla u(z+h)|^{p-2}W(z+h)\cdot \nabla u(z+h)-|\nabla u(z)|^{p-2}W(z)\cdot\nabla u(z)\big)\phi(z)dz.
	\end{align*}
	Substituting the form
	\begin{align*}
		\nabla \phi(z)=2\eta(z)(u(z+h)-u(z))\nabla \eta(z)+\eta(z)^2(\nabla u(z+h)-\nabla u(z))
	\end{align*}
	in (I) and using some elementary inequalities (cf.~\cite[Proof of Theorem 16.3.1]{Astala:Iwaniec:Martin}), we conclude that
	\begin{align*}
		\int_{B_8}&\eta(z)^2|F(z+h)-F(z)|^2dz\\
		&\leq 2\Big|\int_{B_8}\eta(z)[u(z+h)-u(z)]\langle |\nabla u(z+h)|^{p-2}\nabla u(z+h)-|\nabla u(z)|^{p-2}\nabla u(z), \nabla \eta(z)\rangle dz\Big|\\
		&\qquad+ \Big|\int_{B_8} \big(|\nabla u(z+h)|^{p-2}W(z+h)\cdot \nabla u(z+h)-|\nabla u(z)|^{p-2}W(z)\cdot\nabla u(z)\big)\phi(z)dz\Big|\tag{II}.
	\end{align*}
	By the proof of Theorem 16.3.1, the first term in (II) can be bounded from above by
	\begin{align*}
		\text{T}=O(|h|)\Big(\int_{B_8}&\eta(z)^2|F(z+h)-F(z)|^2dz\Big)^{1/2}.
	\end{align*}
	Thus it suffices to show that the second term can be bounded by $O(|h|^2)$ or $T$. To this end, we write
	\begin{align*}
		\int_{B_8} \big(|\nabla u(z+h)|^{p-2}W(z+h)\cdot \nabla u(z+h)-|\nabla u(z)|^{p-2}W(z)\cdot\nabla u(z)\big)\phi(z)dz=\text{III+IV},
	\end{align*}
	where
	\begin{align*}
		\text{III}=\int_{B_8}\langle |\nabla u(z+h)|^{p-2}\nabla u(z+h)-|\nabla u(z)|^{p-2}\nabla u(z), W(z+h)\rangle \phi(z)dz
	\end{align*}
	and
		\begin{align*}
		\text{IV}=\int_{B_8}|\nabla u(z)|^{p-2}\langle W(z+h)-W(z), \nabla u(z)\rangle\phi(z)dz.
		\end{align*}
	By the elementary inequality from~\cite[Proof of Theorem 16.3.1]{Astala:Iwaniec:Martin}, $|$III$|$ can be bounded from above by
	\begin{align*}
		&\|W\|_{L^\infty(B_8)}
		\int_{B_8}\Big||\nabla u(z+h)|^{p-2}\nabla u(z+h)-|\nabla u(z)|^{p-2}\nabla u(z)\Big||\phi(z)|dz\\
		&\leq c(p)\|W\|_{L^\infty(B_8)}\int_{B_8}\Big(|\nabla u(z+h)|^p+|\nabla u(z)|^p\Big)^{(p-2)/2p}|F(z+h)-F(z)||\phi(z)|dz.
	\end{align*}
	By H\"older's inequality, we have
	\begin{align*}
		\int_{B_8}&\Big(|\nabla u(z+h)|^p+|\nabla u(z)|^p\Big)^{(p-2)/2p}|F(z+h)-F(z)||\phi(z)|dz\\
		&\leq c\Big(\int_{B_8}|u(z+h)-u(z)|^pdz\Big)^{1/p}\cdot\Big(\int_{B_8}|\nabla u(z+h)|^p+|\nabla u(z)|^pdz\Big)^{(p-2)/2p}\\
		&\qquad\qquad\qquad\cdot \Big(\int_{B_8}\eta(z)^2|F(z+h)-F(z)|^2dz\Big)^{1/2}\\
		&\leq O(|h|)\Big(\int_{B_8}\eta(z)^2|F(z+h)-F(z)|^2dz\Big)^{1/2}=T.
	\end{align*}
	It remains to estimate $|$IV$|$. Note that $W$ is locally Lipschitz. By H\"older's inequality, we have the bound
	\begin{align*}
     L|h|\Big(\int_{B_8} |\nabla u(z)|^pdz \Big)^{(p-1)/p}\Big(\int_{B_8}|u(z+h)-u(z)|^pdz\Big)^{1/p}=O(|h|^2),
	\end{align*}
	where $L$ depends on the Lipschitz constant of $W$ in $B_8$.
	In conclusion, we have shown that
	\begin{align*}
		\Big(\int_{B_8}\eta(z)^2|F(z+h)-F(z)|^2dz\Big)^{1/2}=O(|h|)
	\end{align*}
	and thus $F\in W^{1,2}_{loc}(B_8)$.
\end{proof}

\section{SUCP I: $q>\max\{p,2\}$}\label{sec:q>p 2}

Our main concern in this section is to consider the equation \eqref{eq:main equation}
with $\|W\|_{L^q(\mathbb{R}^2)} \leq \tilde{M}$ for $1<p<\infty$ and $\max\{p,2\}<q<\infty$. As pointed out in the introduction, by using the scaling argument from~\cite{Bourgain:Kenig:2005}, we only need to consider the problem in the disc $B_8$. It is convenient for us to restate the problem as follows: Let $v$ be a solution of 
\begin{equation}\label{PLapLOGT}
\text{div}(\abs{\nabla v}^{p-2} \nabla v) + A\cdot(\abs{\nabla v}^{p-2}\nabla v) = 0 \ \text{ in } B_8,
\end{equation}
with $A= (A_1, A_2)$ being Lipschitz in $B_8$, $\|A\|_{L^q(B_8)} \leq M, M\geq 1$.

We will need the planar theory of quasiconformal mappings and quasiregular mappings in a sequence of results below and we refer the readers to the excellent book~\cite{Astala:Iwaniec:Martin} for a comprehensive treatment.

\subsection{Auxiliary results}
In this section, we prepare some auxiliary results that are necessary for the proof of Theorem \ref{thm:main thm}.

Denote by $G=v_x-iv_y$ the complex gradient of $v$. Our aim is to derive a non-linear Beltrami equation for (certain function of) $G$. In order to do that we mainly follow the approach by B. Bojarski and T. Iwaniec \cite{Bojarski:Iwaniec:1987}. 

Define 
$F = \abs{G}^aG$, where $a=(p-2)/2$. By Lemma~\ref{lemma:regularity of F}, $F\in W^{1,2}(B_8)$ and so we may apply a simple computation (cf.~\cite{Bojarski:Iwaniec:1987}) to deduce that $F$ satisfies 
the following quasilinear Beltrami equation
\[
\frac{\doo F}{\doo\overline{z}} = q_1 \frac{\doo F}{\doo z} + q_2 \overline{\frac{\doo F}{\doo z}} + q_3 F,
\]
where 
\[
q_1 := -\frac{1}{2}\left(\frac{p-2-a}{p+a} + \frac{a}{a+2}\right)\frac{\overline{F}}{F},
\]
\[
q_2 := -\frac{1}{2}\left(\frac{p-2-a}{p+a} - \frac{a}{a+2}\right)\frac{F}{\overline{F}},
\]
and
\[
q_3 := -\frac{2(a+1)}{a+p}\left[A_1\left(1+ \frac{\overline{F}}{F}\right) + i A_2\left(1 - \frac{\overline{F}}{F}\right) \right].
\]

The following representation result should not surprise any expert and the proof we have adapted here is very similar to that used in~\cite[Theorem 4.3]{bo09}.
\begin{theorem}\label{ThpLLL}
	Let $F$ satisfies the following quasilinear  Beltrami equation
	\begin{equation}\label{quASi1}
	\frac{\doo F}{\doo\overline{z}} = q_1 \frac{\doo F}{\doo z} + q_2 \overline{\frac{\doo F}{\doo z}} + q_3 F\ \text{in}\ B_8,
	\end{equation}
	where
	\[
	q_1 := -\frac{1}{2}\left(\frac{p-2}{p+2} + \frac{p-2}{3p-2}\right)\frac{\overline{F}}{F},
	\]
	\[
	q_2 := -\frac{1}{2}\left(\frac{p-2}{3p-2} - \frac{p-2}{p+2}\right)\frac{F}{\overline{F}},
	\]
	and
	\[
	q_3 := -\frac{2p}{3p-2}\left[A_1\left(1+ \frac{\overline{F}}{F}\right) + i A_2\left(1 - \frac{\overline{F}}{F}\right) \right].
	\]
	Then the solution is represented by
	\[
	F(z) = h\big(\phi(z)\big)e^{\omega(z)} \ \text{in}\ B_8,
	\]
	where $\phi:B_8\to \phi(B_8)$ is a $K$-quasiconformal mapping, $h:\phi(B_8)\to \R^2$ is holomorphic,  and 
	\[
	\omega(z) = T(g(z)) = -\frac{1}{\pi}\int_{B_8}\frac{g(\xi)}{\xi-z}d\xi
	\]
	for some $g\in L^\delta(B_8)$ with $\delta\in [2,\min\{p_0,q\})$, where $T$ is the Cauchy transform of $g$. Moreover, the quasiconformality coefficient $K$ and the constant $p_0>2$ depending, explicitly, only on $p$.
\end{theorem}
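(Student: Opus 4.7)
The plan is to view (\ref{quASi1}) as an $\R$-linear Beltrami equation with measurable coefficients and a bounded zero-order term, and then apply the classical representation theory for planar Beltrami equations. First, Lemma~\ref{lemma:regularity of F} gives $F\in W^{1,2}_{loc}(B_8)$, so (\ref{quASi1}) holds in the distributional sense. The crucial structural observation is that the factors $\overline{F}/F$ (in $q_1$) and $F/\overline{F}$ (in $q_2$) are unimodular wherever $F\neq 0$, while on $\{F=0\}$ the terms they multiply vanish, so we may assign them any unimodular value there. This yields the pointwise ellipticity bound
\[
|q_1(z)| + |q_2(z)| \leq k_p<1
\]
for a constant $k_p$ depending only on $p$. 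Moreover $|q_3|\leq C_p|A|$ pointwise, so $q_3\in L^q(B_8)$ with norm controlled by $M$.

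Next, seek a representation $F=e^{\omega}H$ with $\omega=T(g)$ for some $g\in L^\delta(B_8)$. A direct computation, using $\omega_{\overline{z}}=g$ and $\omega_z=Sg$ where $S=\partial_z T$ is the Beurling--Ahlfors transform, converts (\ref{quASi1}) into an equation for $H$ whose zero-order term vanishes precisely when
\[
g = q_3 + q_1\, Sg + q_2\,\overline{Sg}\cdot\overline{F}/F\quad\text{in }B_8.
\]
With $F$ treated as given datum, this is an $\R$-linear fixed-point equation for $g$. Since $\|S\|_{L^2\to L^2}=1$, continuity of $\|S\|_{L^\delta\to L^\delta}$ in $\delta$ together with $k_p<1$ lets us pick $p_0=p_0(p)>2$ so that $k_p\|S\|_{L^\delta\to L^\delta}<1$ on $[2,p_0)$. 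Combined with $q_3\in L^q(B_8)$ and H\"older's inequality, the map is a strict contraction on $L^\delta(B_8)$ for every $\delta\in[2,\min\{p_0,q\})$; a Neumann series furnishes the unique solution $g\in L^\delta(B_8)$, and we set $\omega:=T(g)$.

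With this choice, $H:=Fe^{-\omega}$ satisfies the pure $\R$-linear Beltrami equation
\[
H_{\overline{z}} = q_1 H_z + q_2 e^{\overline{\omega}-\omega}\,\overline{H_z}\quad\text{in }B_8,
\]
whose coefficients still obey $|q_1|+|q_2 e^{\overline{\omega}-\omega}|\leq k_p<1$. The Stoilow factorization for $\R$-linear Beltrami equations (cf.~\cite{Astala:Iwaniec:Martin}) then provides a $K$-quasiconformal homeomorphism $\phi:B_8\to\phi(B_8)$ with $K=(1+k_p)/(1-k_p)$ and a holomorphic $h$ on $\phi(B_8)$ with $H=h\circ\phi$, completing the representation $F=(h\circ\phi)e^\omega$. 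The exponent $p_0>2$ coincides with the higher-integrability threshold for $K$-quasiconformal mappings (Bojarski's improvement; cf.~\cite{Astala:Iwaniec:Martin}). The main obstacle is the genuine quasilinearity --- $q_1,q_2,q_3$ all depend on the unknown $F$ --- but this is defused by the unimodular structure of $\overline{F}/F$, which does not affect ellipticity, so freezing the coefficients at the given $F$ reduces the problem to linear Beltrami theory.
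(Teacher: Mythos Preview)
Your argument is correct and follows the same overall scheme as the paper --- eliminate the zero-order term by writing $F=e^{\omega}H$ with $\omega=T(g)$, solve an integral equation for $g$ in $L^\delta$, and then apply the Stoilow factorization to the remaining homogeneous Beltrami equation --- but there are two technical differences worth noting.

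First, the paper does not carry the $\R$-linear structure throughout. Instead, it immediately absorbs the conjugate term: since $|q_2\,\overline{F_z}|=|q_2|\,|F_z|$, one may write $q_1F_z+q_2\overline{F_z}=o_1 F_z$ with a measurable $o_1$ satisfying $|o_1|\le |q_1|+|q_2|\le k$, reducing (\ref{quASi1}) to the $\mathbb C$-linear form $F_{\bar z}=o_1F_z+o_2F$. The integral equation then becomes the simpler $(I-\hat o_1 S)g=\hat o_2$, and the residual equation for $f=Fe^{-\omega}$ is the standard Beltrami equation $f_{\bar z}=o_1 f_z$. Your route keeps the $q_2\overline{F_z}$ term, solves the $\R$-linear fixed-point equation $g=q_3+q_1Sg+q_2(\bar F/F)\overline{Sg}$, and lands on an $\R$-linear homogeneous equation for $H$; this is perfectly valid since $|q_2e^{\bar\omega-\omega}|=|q_2|$ and $\R$-linear Beltrami solutions are still $K$-quasiregular, hence factor.

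Second, the paper obtains the explicit threshold $p_0=1+1/k$ by invoking the sharp invertibility of the Beltrami operator $I-\hat o_1 S$ on $L^\delta$ for $\delta\in(1+k,1+1/k)$ due to Astala--Iwaniec--Saksman, whereas you produce $p_0$ by the softer argument that $\|S\|_{L^\delta\to L^\delta}\to 1$ as $\delta\to 2$. Both give a $p_0>2$ depending only on $p$, but the paper's value is sharper and matches the higher-integrability exponent you allude to at the end; your contraction argument does not by itself yield that identification.
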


\begin{proof}
	Let $F$ be the solution of the quasilinear Beltrami equation~\eqref{quASi1} and let $q_i$, $i=1,2,3$, be given as in Theorem~\ref{ThpLLL}. It is clear that if $F$ satisfies \eqref{quASi1}, then it is a solution of the following differential inequality
	\begin{equation}\label{eq:differential inequality}
		\Big|\frac{\doo F}{\doo\overline{z}}\Big| \leq  k \Big|\frac{\doo F}{\doo z}\Big| + oF\ \text{in}\ B_8,
	\end{equation}
	where 
	$$k=\|q_1\|_{L^\infty(B_8)}+\|q_2\|_{L^\infty(B_8)}\in (0,1)$$
	and $o\in L^q(B_8)$. We next express the equation \eqref{eq:differential inequality} as
	\begin{equation}\label{eq:simple equation}
	\frac{\doo F}{\doo\overline{z}}= o_1\frac{\doo F}{\doo z} + o_2 F\ \text{in}\ B_8,
	\end{equation}
	where $\|o_1\|_{L^\infty(B_8)}\leq k$ and $|o_2(z)|\leq o(z)$ for a.e. $z\in B_8$.
	
	Denote by $\hat{o}_i$, $i=1,2$, the zero extension of $o_i$, i.e., simply define $o_i\equiv 0$ outside $B_8$. Consider the integral equation
	\begin{equation}\label{integral}
	(I-\hat{o}_1S)g = \hat{o}_2\ \text{in }\R^2,
	\end{equation}
	where
	$S$ is the Beurling transform of $g$ defined as
	\[
	Sg(z) := -\frac{1}{\pi}\int_{B_8}\frac{g(\xi)}{(\xi-z)^2} d\xi.
	\]
	In view of our assumptions $(A_1, A_2) \in L^q(B_8)$, $q>2$, we have $\hat{o}_2\in L^q(\R^2)$ with compact support. In particular, $\hat{o}_2\in L^\delta(\R^2)$ for any $\delta\leq q$. Note also that the Beltrami coefficient $\hat{o}_1$ is a bounded function with compact support. By the well-known $L^\delta$-theory of Beltrami operators (cf.~\cite[Chapter 14]{Astala:Iwaniec:Martin} or~\cite[Theorem 1]{Astala:Iwaniec:Saksman:2001}), we know that for each $\delta\in (1+k,\min\{q,p_0\})$, where $p_0=1+\frac{1}{k}$, there exists a unique solution $g\in L^{\delta}(\R^2)$ that solves~\eqref{integral}. 
	Set
	\[
	\omega(z) = T(g(z)) = -\frac{1}{\pi} \int_{B_8}\frac{g(\xi)}{\xi-z} d\xi,
	\]
	where $T$ is the well-known Cauchy transform of $g$.

	We now consider the function
	\[
	f(z) = F(z)e^{-\omega(z)}.
	\]
	Using the well-known facts (cf.~\cite[Chapter 4]{Astala:Iwaniec:Martin}) that
	\[
	\frac{\doo \omega}{\doo\bar{z}} = g \ \ \text{and}\ \ \frac{\doo \omega}{\doo z} = S(g),
	\]
	we easily obtain 
	\[
	f_{\bar{z}} = \frac{\doo F}{\doo\bar{z}} e^{-\omega} - g e^{-\omega} F \ \ \text{and}\ \ f_{z} = \frac{\doo F}{\doo z} e^{-\omega} - F e^{-\omega} S(g).
	\]
	Since $g$ solves \eqref{integral}, we have
	\[
	\begin{split}
	\left[\frac{\doo f}{\doo\overline{z}} - o_1 \frac{\doo f}{\doo z} \right]e^{\omega}
	& = \frac{\doo F}{\doo\overline{z}} - F\left(g-o_1S(g)\right) - o_1 \frac{\doo F}{\doo z}\\
	&=\frac{\doo F}{\doo\overline{z}} - o_1 \frac{\doo F}{\doo z}-o_2F= 0\ \text{in}\ B_8.
	\end{split}
	\]
	Since $e^{\omega}$ is non-negative, we infer that $f$ solves the following Beltrami equation
	\begin{equation}\label{BELpp}
	\frac{\doo f}{\doo\overline{z}} - o_1 \frac{\doo f}{\doo z} = 0 \ \text{in}\ B_8.
	\end{equation}
	Note that $\|o_1\|_{L^\infty(B_8)}\leq k<1$. Applying \cite[Theorem 3.3]{Bojarski:1957} or~\cite[Corollary 5.5.4]{Astala:Iwaniec:Martin}, we obtain 
	\[
	f(z) = h(\phi(z))
	\]
	where $\phi:B_8\to \phi(B_8)$ is $K$-quasiconformal with $K=\frac{1+k}{1-k}$ (depending only on $p$) and $h:\phi(B_8)\to \R^2$ is holomorphic.
	Consequently, each solution of \eqref{quASi1} is of the form 
	\[
	F(z) = h(\phi(z))e^{\omega(z)},
	\]
	where $\phi:B_8\to \phi(B_8)$ is a $K$-quasiconformal mapping, $h:\phi(B_8)\to \R^2$ is holomorphic,  and 
	\[
	\omega(z) = T(g) = -\frac{1}{\pi}\int_{B_8}\frac{g(\xi)}{\xi-z}d\xi
	\]
	with $g\in L^\delta(B_8)$ being the solution of~\eqref{integral}.

\end{proof}

\begin{remark}\label{rmk:on the k}
	It is easy to compute that	
	\begin{equation*}
	k=
	\begin{cases}
	\frac{p-2}{p+2} & \text{if } p\geq 2 \\
	\frac{2-p}{3p-2} & \text{if } 1<p\leq 2
	\end{cases}
	\end{equation*}
	and $p_0=1+\frac{1}{k}$. It is also clear from the proof of Theorem~\ref{ThpLLL} that if $2\leq q<1+\frac{1}{k}$, then the conclusion of Theorem \ref{ThpLLL} remains valid with $\delta\in [2,q]$.
\end{remark}

The following proposition will serve as the key for our later proofs.
\begin{proposition}\label{prop:key prop}
	Let $\phi\colon B_8\to \mathbb{R}^2$ be a $K$-quasiregular mapping. Then for each $r\in (0,\frac{1}{4})$, there exists $\theta=\frac{E}{\log\frac{E'}{r}}\in (0,1)$, where $E$ and $E'$, depending only on  $K$, such that
	\begin{align*}
	\|\phi\|_{L^\infty(B_1)}\leq C\big(r^{-1}\|\phi\|_{L^2(B_{r/2})}\big)^{\theta}\|\phi\|_{L^2(B_7)}^{1-\theta}.
	\end{align*}
\end{proposition}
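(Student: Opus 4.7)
The plan is to prove this Hadamard-type three-circle estimate by reducing to the holomorphic case via the Stoilow factorization and then translating the resulting sup-norm inequality into the $L^\infty/L^2$ form in the statement.

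First, I would invoke the Stoilow factorization: the $K$-quasiregular map $\phi\colon B_8\to\mathbb{R}^2$ factors as $\phi = h\circ\psi$, where $\psi\colon B_8 \to \Omega:=\psi(B_8)$ is a $K$-quasiconformal homeomorphism with $\psi(0)=0$ and $h$ is holomorphic on $\Omega$. After rescaling the target by $\sup_{B_7}|\psi|$, Mori-type distortion estimates applied to both $\psi$ and $\psi^{-1}$ yield H\"older estimates with exponent $1/K$ on fixed compact subsets, with constants depending only on $K$. Unscaling, this produces radii $\rho_1=c_1 r^K$ and fixed constants $\rho_2,\rho_3$ satisfying $\rho_1<\rho_2<\rho_3$ for $r\in(0,1/4)$, such that
\begin{equation*}
B_{\rho_1}\subset\psi(B_{r/4}),\quad \psi(B_1)\subset B_{\rho_2},\quad B_{\rho_3}\subset\psi(B_6),
\end{equation*}
with all concentric disks lying inside $\Omega$ and all constants depending only on $K$.

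Next, I would apply the classical Hadamard three-circle theorem to $h$ on the annulus $\{\rho_1\leq|\zeta|\leq\rho_3\}\subset\Omega$. Writing $M_\infty(\rho;h):=\sup_{|\zeta|\leq\rho}|h(\zeta)|$, this yields
\begin{equation*}
M_\infty(\rho_2;h) \leq M_\infty(\rho_1;h)^{\theta}\,M_\infty(\rho_3;h)^{1-\theta},\qquad \theta = \frac{\log(\rho_3/\rho_2)}{\log(\rho_3/\rho_1)}.
\end{equation*}
Since $\rho_1=c_1r^K$ and $\rho_2,\rho_3$ are fixed, a direct computation gives $\theta = E/\log(E'/r)$ with $E,E'$ depending only on $K$, which is exactly the form claimed. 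The inclusions from the first step translate the three sup-norms of $h$ into sup-norms of $\phi$:
\begin{equation*}
\|\phi\|_{L^\infty(B_1)} \leq M_\infty(\rho_2;h),\quad M_\infty(\rho_1;h)\leq\|\phi\|_{L^\infty(B_{r/4})},\quad M_\infty(\rho_3;h)\leq\|\phi\|_{L^\infty(B_6)}.
\end{equation*}
The subharmonicity of $|h|^2$ (equivalently, the standard reverse H\"older / mean-value inequality for $K$-quasiregular mappings) then gives $\|\phi\|_{L^\infty(B_{r/4})}\leq C(K)r^{-1}\|\phi\|_{L^2(B_{r/2})}$ and $\|\phi\|_{L^\infty(B_6)}\leq C(K)\|\phi\|_{L^2(B_7)}$, and chaining these with the three-circle inequality produces the desired bound.

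The main obstacle, I expect, is the distortion bookkeeping in the first step: arranging simultaneously that $\rho_1<\rho_2<\rho_3$ and that $E,E'$ depend only on $K$ requires a careful normalization of $\psi$ so that Mori-type H\"older bounds apply uniformly to both $\psi$ and $\psi^{-1}$. Once this is in place, the three-circle identity yields the exponent in the advertised form automatically, and the final conversion between $L^\infty$ and $L^2$ is the classical mean-value estimate for $|h|^2$.
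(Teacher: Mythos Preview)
Your proposal is correct and follows essentially the same line as the paper: Stoilow factorization, quantitative distortion bounds for the quasiconformal factor to set up the three radii, Hadamard's three-circle theorem for the holomorphic factor, and then an $L^\infty$--$L^2$ conversion. The only minor point of divergence is that the paper justifies the last step by observing that the coordinate functions $\phi_i$ are $\mathcal{A}$-harmonic of type $2$ (as compositions of harmonic functions with a quasiregular map) and invoking standard interior sup-estimates for $\mathcal{A}$-harmonic functions, whereas you appeal to the reverse-H\"older/mean-value inequality for quasiregular mappings; these are equivalent routes to the same bound, though your parenthetical ``subharmonicity of $|h|^2$'' would need the change of variables through $\psi$ to become an estimate on $\phi$, so the quasiregular mean-value inequality is the cleaner citation here.
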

\begin{proof}
	Since $\phi\colon B_8\to \mathbb{R}^2$ is $K$-quasiregular, by the well-known factorization (cf.~\cite[Theorem 3.3]{Bojarski:1957} or~\cite[Theorem 3.3]{bo09} or \cite[Corollary 5.5.4]{Astala:Iwaniec:Martin}), $\phi=F\circ f$, where $F\colon f(B_8)\to \mathbb{R}^2$ is an analytic function and $f\colon B_8\to f(B_8)$ is a $K$-quasiconformal mapping such that $f(0)=0$ and $f(B_6)=B_6$. In particular, $f|_{B_6}$ is $\eta$-quasisymmetric with $\eta$ depending only on $K$ (cf.~\cite[Theorem 11.19]{heinonen01}). Then it follows (from~\cite[Theorem 11.3]{heinonen01}) that there exist positive constants $C_0$ and $\alpha\in (0,1]$, depending only on $K$ (and the dimension $n=2$) such that
	\begin{align}\label{eq:bi-Holder continuous}
	C_0^{-1}|x-y|^{1/\alpha}\leq |f(x)-f(y)|\leq C_0|x-y|^{\alpha}	
	\end{align} 
	for all $x,y\in B_6$.
	
	We next show that 
	\begin{align}\label{eq:three circle}
	\|\phi\|_{L^\infty(B_1)}\leq \|\phi\|_{L^\infty(B_{r/4})}^{\theta}\|\phi\|_{L^\infty(B_6)}^{1-\theta}.
	\end{align}
	Write $r_1=r/4$, $r_2=1$ and $r_3=6$. We may select $\tilde{r}_1$ and $\tilde{r}_2$, quantitatively, so that $f(B_{r_1})\supset B_{\tilde{r}_1}$ and that $f(B_{r_2})\subset B_{\tilde{r}_2}$. For instance, we could choose $\tilde{r}_1=\big(\frac{r_1}{C_0}\big)^{1/\alpha}$ and $\tilde{r}_2=6-\big(\frac{6-r_2}{C_0}\big)^{1/\alpha}$.
	
	Then it follows from our choice and the Hadamard'€™s three circle theorem for analytic functions that
	\begin{align*}
	\|\phi\|_{L^\infty(B_{r_2})}&=\|F\circ f\|_{L^\infty(B_{r_2})}\leq \|F\|_{L^\infty(B_{\tilde{r}_2})}\\
	&\leq \|F\|_{L^\infty(B_{\tilde{r}_1})}^\theta\|F\|_{L^\infty(B_6)}^{1-\theta}\leq \|\phi\|_{L^\infty(B_{r_1})}^\theta\|\phi\|_{L^\infty(B_6)}^{1-\theta},
	\end{align*}
	where $$0<\theta=\theta(r)=\frac{\log\frac{r_3}{\tilde{r}_2}}{\log\frac{r_3}{\tilde{r}_1}}=\frac{\log\frac{6}{6-(\frac{5}{C_0})^{1/\alpha}}}{\log\frac{6}{(\frac{r}{C_0})^{1/\alpha}}}<1.$$
	For later purpose, we will refer the above result as the Hadamard'€™s three circle theorem for quasiregular mappings.
	
	We next point out the following standard fact about quasiregular mappings from~\cite[Section 14.35]{Heinonen:Kilpelainen:Martio:1993}:
	If $f\colon \Omega\to \mathbb{R}^2$ is a $K$-quasiregular mapping and if $u\colon f(\Omega)\to \mathbb{R}$ is a harmonic function, then $v=u\circ f$ is $\mathcal{A}$-harmonic (of type 2) in $\Omega$. Here $v$ being $\mathcal{A}$-harmonic (of type 2) means that $v$ is a weak solution for
	\begin{align*}
	\dive \mathcal{A}(x,\nabla v(x))=0,
	\end{align*}
	where $\mathcal{A}\colon \mathbb{R}^2\times \mathbb{R}^2\to \mathbb{R}^2$ satisfies
	\begin{itemize}
		\item[i)] $\mathcal{A}(x,\xi)\cdot \xi\geq \alpha |\xi|^2$;
		\item[ii)] $|\mathcal{A}(x,\xi)|\leq \beta |\xi|$;
		\item[iii)] $\big(\mathcal{A}(x,\xi_1)-\mathcal{A}(x,\xi_2)\big)\cdot\big(\xi_1-\xi_2\big)>0$ for all $\xi_1,\xi_2\in \mathbb{R}^2$ with $\xi_1\neq \xi_2$;
		\item[iv)] $\mathcal{A}(x,\lambda \xi)=\lambda\mathcal{A}(x,\xi)$ whenever $\lambda\neq 0$.
	\end{itemize}
	For $i=1,2$, the coordinate function $x_i$ is a harmonic function and so the component function $\phi_i$ of the $K$-quasiregular mapping $\phi$ is $\mathcal{A}$-harmonic (of type 2). Since $\phi_i$ is $\mathcal{A}$-harmonic (of type 2), standard interior estimates (see for instance~\cite[Section 3]{Heinonen:Kilpelainen:Martio:1993}) implies that
	\begin{align*}
	\|\phi_i\|_{L^\infty(B_{r/4})}\leq Cr^{-1}\|\phi_i\|_{L^2(B_{r/2})}
	\end{align*}
	and that
	\begin{align*}
	\|\phi_i\|_{L^\infty(B_6)}\leq C\|\phi_i\|_{L^2(B_7)}.
	\end{align*}
	Substituting these estimates in~\eqref{eq:three circle}, we have
	\begin{align*}
	\|\phi_i\|_{L^\infty(B_1)}\leq C\big(r^{-1}\|\phi_i\|_{L^2(B_{r/2})}\big)^{\theta}\|\phi_i\|_{L^2(B_7)}^{1-\theta},
	\end{align*}
	from which our claim follows.
\end{proof}

Finally, we need the following Caccioppoli's inequality.
\begin{lemma}\label{cacciWE}
	Let $v$ be a real solution of the p-Laplace equation \eqref{PLapLOGT}, $1<p<\infty$, with the lower order gradient term. Then the following Caccioppoli's inequality holds true.
	\[
	\int_{B_r}\abs{\nabla v}^p dx \leq \frac{C\|A\|_{L^p(B_8)}^{p}}{(\rho-r)^p} \|v\|_{L^{\infty}(B_{\rho})}^{p},
	\]
	for $0<r<\rho<8$, where the positive constant $C$ depends only on $p$.
\end{lemma}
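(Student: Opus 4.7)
The plan is to derive the bound by testing the weak formulation of \eqref{PLapLOGT} against $\eta=\phi^p v$, where $\phi\in C_0^\infty(B_\rho)$ is a standard cutoff with $\phi\equiv 1$ on $B_r$, $0\le\phi\le 1$, and $|\nabla\phi|\le C/(\rho-r)$. Since $v\in W^{1,p}_{loc}(B_8)$ is bounded on $B_\rho$ and $\phi$ has compact support in $B_\rho\Subset B_8$, the test function $\eta$ belongs to $W^{1,p}_0(B_\rho)$, so this substitution is legitimate.

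Expanding $\nabla(\phi^p v)=\phi^p\nabla v+p\phi^{p-1}v\nabla\phi$, the weak formulation becomes
\[
\int_{B_\rho}\phi^p|\nabla v|^p\,dx
=-p\int_{B_\rho}\phi^{p-1}v\,|\nabla v|^{p-2}\nabla v\cdot\nabla\phi\,dx
+\int_{B_\rho}\phi^p v\,A\cdot\bigl(|\nabla v|^{p-2}\nabla v\bigr)\,dx.
\]
Both integrals on the right are estimated by Young's inequality $ab\le \varepsilon a^{p/(p-1)}+C_\varepsilon b^p$ applied to the pair $\bigl(\phi^{p-1}|\nabla v|^{p-1},\ |v||\nabla\phi|\bigr)$ in the first term and $\bigl(\phi^{p-1}|\nabla v|^{p-1},\ \phi|v||A|\bigr)$ in the second, producing in each case an $\varepsilon\phi^p|\nabla v|^p$ piece that can be absorbed into the left-hand side. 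After absorption one obtains
\[
\int_{B_r}|\nabla v|^p\,dx\;\le\; C\int_{B_\rho}|v|^p|\nabla\phi|^p\,dx+C\int_{B_\rho}|A|^p|v|^p\,dx.
\]

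Bounding $|v|$ by $\|v\|_{L^\infty(B_\rho)}$ and using $|\nabla\phi|\le C/(\rho-r)$ together with $|B_\rho|\le C$ (since $\rho<8$) yields
\[
\int_{B_r}|\nabla v|^p\,dx\;\le\; \frac{C}{(\rho-r)^p}\bigl(1+(\rho-r)^p\|A\|_{L^p(B_\rho)}^p\bigr)\|v\|_{L^\infty(B_\rho)}^p,
\]
which, after absorbing the harmless constant $1$ into $\|A\|_{L^p(B_8)}^p$ (which, under the working normalisation $M\ge 1$ of the paper, is $\ge 1$), gives the stated form
\[
\int_{B_r}|\nabla v|^p\,dx\le\frac{C\|A\|_{L^p(B_8)}^p}{(\rho-r)^p}\|v\|_{L^\infty(B_\rho)}^p,
\]
with $C=C(p)$.

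There is no real obstacle here; the only minor point that deserves care is the admissibility of the test function $\phi^p v$ in \eqref{eq:def for weak solution}, which requires $v\in L^\infty_{loc}\cap W^{1,p}_{loc}$ so that $\eta\in W^{1,p}_0(B_\rho)$ (and also ensures the drift term is integrable since $A\in L^p_{loc}$ and $|\nabla v|^{p-1}\in L^{p/(p-1)}_{loc}$). Everything else is a textbook Young's-inequality absorption, and the precise form of the claimed bound is recovered by aggregating the cutoff contribution and the drift contribution under a single constant depending on $\|A\|_{L^p(B_8)}$.
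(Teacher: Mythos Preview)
Your proof is correct and essentially identical to the paper's: both test \eqref{PLapLOGT} against $\phi^p v$ with the same standard cutoff, split the right-hand side into the cutoff term and the drift term, and absorb into the left using the standing assumption $M\ge 1$ to merge the two contributions. The only cosmetic difference is that the paper uses H\"older's inequality (raising both sides to the $p$th power) rather than Young's inequality with $\varepsilon$ for the absorption step.
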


\begin{proof}
	Let $\eta\in C_0^\infty(B_8)$ be a smooth cut-off function, i.e., $0\leq \eta\leq 1$ in $B_\rho$, $\eta\equiv 0$ in $B_8\backslash \overline{B_\rho}$, $\eta\equiv 1$ in $B_r$ and $|\nabla \eta|\leq \frac{4}{\rho-r}$ in $B_\rho\backslash \overline{B_r}$. Then $v\eta^p\in W^{1,p}_0(B_8)$ is an admissible test function and so
	\begin{align*}
	\int_{B_8}\Big(\dive\big(|\nabla v|^{p-2}\nabla v\big)+|\nabla v|^{p-2}A\cdot \nabla v\Big)v\eta^p dx=0.
	\end{align*}
	Integration by parts, we have
	\begin{align*}
	-\int |\nabla v|^{p-2}\nabla v\cdot \big(\nabla v\eta^p+pv\eta^{p-1}\nabla \eta\big)dx+\int |\nabla v|^{p-2}A\cdot \nabla v v\eta^pdx=0.
	\end{align*}
	It follows
	\begin{align*}
	\int |\nabla v|^p\eta^pdx&=-p\int v|\nabla v|^{p-2}\eta^{p-1}\nabla v\cdot \nabla \eta dx+\int |\nabla v|^{p-2}A\cdot \nabla v v\eta^pdx\\
	&=: I_1+I_2.
	\end{align*}
	By H\"older's inequality,
	\begin{align*}
	|I_1|&\leq p\int |\eta \nabla v|^{p-1}|v\nabla \eta|dx\\
	&\leq p\Big(\int \eta^p|\nabla v|^pdx\Big)^{1-\frac{1}{p}}\Big(\int |v|^p|\nabla \eta|^pdx\Big)^{\frac{1}{p}}.
	\end{align*}
	Similarly, 
	\begin{align*}
	|I_2|&\leq \int \eta|A||v| |\eta\nabla v|^{p-1}dx\\
	&\leq \Big(\int \eta^p|\nabla v|^pdx\Big)^{1-\frac{1}{p}}\Big(\int |v|^p|A|^p \eta^pdx\Big)^{\frac{1}{p}}.
	\end{align*}
	Combining these estimates, we obtain
	\begin{align*}
	\int |\nabla v|^{p}\eta^pdx&\leq p^p\int |v|^p|\nabla \eta|^pdx+\int |v|^p|A|^p \eta^pdx\\
	&\leq \frac{c(p)}{(\rho-r)^p}\|v\|_{L^\infty(B_\rho)}^p+\|v\|_{L^\infty(B_\rho)}^p\|A\|_{L^p(B_8)}^p\\
	&\leq \frac{c(p)\|A\|_{L^p(B_8)}^{p}}{(\rho-r)^p} \|v\|_{L^{\infty}(B_{\rho})}^{p},
	\end{align*}
	from which our lemma follows.	
\end{proof}

\subsection{Proofs of the main results}
With all the necessary auxiliary results at hand, the proofs of our following main results go along the same line as~\cite[Section 2]{Kenig:Wang:2015}.

\begin{theorem}\label{CaseiThe}
	Let $v \in W_{loc}^{1,p}(B_8)$ be a real solution of  \eqref{PLapLOGT}. Assume that $v$ satisfies $\|v\|_{L^{\infty}(B_8)}\leq C_0$ and $\sup_{z\in B_1}\abs{\nabla v(z)} \geq 1$. Then
	\begin{equation}\label{esrir}
	\|v\|_{L^{\infty}(B_r)} \geq C_{0}^{C_1} r^{C_2\log C_0} r^{C_3M}
	\end{equation}
	where $C_1, C_2, C_3$ are positive constants depending only on $p$ and $q$.
\end{theorem}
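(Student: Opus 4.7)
The plan is to combine the Beltrami representation from Theorem~\ref{ThpLLL}, the three-circle estimate of Proposition~\ref{prop:key prop} applied to the quasiregular part, and the Caccioppoli estimate of Lemma~\ref{cacciWE} used in two directions: once to provide an a priori upper bound on $F$ in terms of $\|v\|_{L^\infty}$, and once more, on a smaller scale, to convert the resulting lower bound on $\|F\|_{L^2(B_{r/2})}$ back into a lower bound on $\|v\|_{L^\infty(B_r)}$.

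First I would set $G = v_x - i v_y$ and $F = |G|^{(p-2)/2}G$, so that $|F|^2 = |\nabla v|^p$ and the hypothesis $\sup_{B_1}|\nabla v|\ge 1$ gives $\|F\|_{L^\infty(B_1)} \ge 1$. By the discussion preceding Theorem~\ref{ThpLLL} and by Lemma~\ref{lemma:regularity of F}, $F\in W^{1,2}_{loc}(B_8)$ satisfies the quasilinear Beltrami equation with coefficients $q_1,q_2,q_3$ in the form of Theorem~\ref{ThpLLL}. Therefore
\[
F(z) = h(\phi(z))\,e^{\omega(z)} \quad\text{in } B_8,
\]
with $\phi\colon B_8\to\phi(B_8)$ a $K$-quasiconformal map ($K$ depending only on $p$), $h$ holomorphic, and $\omega = T(g)$ for some $g\in L^\delta(B_8)$ with $\delta\in(2,\min\{p_0,q\})$. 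The integral equation $(I-\hat o_1 S)g = \hat o_2$ with $\|\hat o_1\|_\infty\le k<1$ and $\|\hat o_2\|_{L^\delta}\le C(p,q)M$ yields $\|g\|_{L^\delta(B_8)}\le C(p,q)M$. Since $\delta>2$, standard mapping properties of the Cauchy transform on a bounded domain give $\|\omega\|_{L^\infty(B_8)}\le C(p,q)M$; set $\Lambda := e^{\|\omega\|_\infty}\le e^{C(p,q)M}$.

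Next I would apply Proposition~\ref{prop:key prop} to the $K$-quasiregular map $\Phi := h\circ\phi = F e^{-\omega}$. For $r\in (0,1/4)$ and the associated exponent $\theta = E/\log(E'/r)\in(0,1)$ this gives
\[
\|\Phi\|_{L^\infty(B_1)} \leq C\bigl(r^{-1}\|\Phi\|_{L^2(B_{r/2})}\bigr)^{\theta}\|\Phi\|_{L^2(B_7)}^{1-\theta}.
\]
Using $\Lambda^{-1}\le |e^{-\omega}|\le \Lambda$ pointwise, together with $\|F\|_{L^\infty(B_1)}\ge 1$, this inequality becomes
\[
\Lambda^{-1} \le C\Lambda\,r^{-\theta}\,\|F\|_{L^2(B_{r/2})}^{\theta}\,\|F\|_{L^2(B_7)}^{1-\theta}.
\]
Now I would control $\|F\|_{L^2(B_7)}$ from above: since $\|F\|_{L^2(B_7)}^2=\int_{B_7}|\nabla v|^p$, Lemma~\ref{cacciWE} with $r=7,\,\rho=8$ and the embedding $\|A\|_{L^p(B_8)}\le C(p,q)M$ yield $\|F\|_{L^2(B_7)}\le C (MC_0)^{p/2}$. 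Substituting and solving for $\|F\|_{L^2(B_{r/2})}$,
\[
\|F\|_{L^2(B_{r/2})} \;\ge\; C^{-1/\theta}\,r\,\Lambda^{-2/\theta}\,(MC_0)^{-p(1-\theta)/(2\theta)}.
\]

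Finally, the lower bound on $\|F\|_{L^2(B_{r/2})}$ is transferred to $\|v\|_{L^\infty(B_r)}$ by using Caccioppoli once more on the smaller scale ($r/2, r$):
\[
\|F\|_{L^2(B_{r/2})}^{2} = \int_{B_{r/2}}|\nabla v|^p \;\le\; \frac{C M^p}{r^p}\,\|v\|_{L^\infty(B_r)}^p,
\]
so
\[
\|v\|_{L^\infty(B_r)} \;\ge\; C'\,M^{-1}\,r\,\|F\|_{L^2(B_{r/2})}^{2/p}.
\]
Inserting the previous bound and using $\Lambda \le e^{C(p,q)M}$ and $1/\theta = \log(E'/r)/E$ produces the factor $\Lambda^{-4/(p\theta)} = r^{C_3 M}\cdot\text{const}$, while $(MC_0)^{-(1-\theta)/\theta}$ splits into $(MC_0)^{-1}\cdot r^{\log(MC_0)/E}\cdot\text{const}$, the $r$-term being absorbed into $r^{C_2\log C_0}\cdot r^{C_3M}$. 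Collecting the powers of $C_0$ and $M$ yields the desired bound $\|v\|_{L^\infty(B_r)}\ge C_0^{C_1} r^{C_2\log C_0} r^{C_3 M}$ with $C_1,C_2,C_3$ depending only on $p$ and $q$.

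The delicate point is the quantitative control of $\omega$: one needs $\delta>2$ in Theorem~\ref{ThpLLL} to guarantee $T(g)\in L^\infty$, which is precisely the reason this argument works in the regime $q>\max\{p,2\}$ and breaks at the endpoint $q=\max\{p,2\}$ for $p\le 2$ treated separately later. A secondary technical issue is tracking the way $1/\theta = \log(E'/r)/E$ converts multiplicative factors $e^{cM}$ and $(MC_0)^{c}$ into the polynomial-in-$r$ factors $r^{C_3 M}$ and $r^{C_2\log C_0}$ appearing in \eqref{esrir}; this bookkeeping, though routine, must be done carefully so that all dependencies are on $p$ and $q$ only.
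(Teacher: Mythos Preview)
Your proof is correct and follows essentially the same route as the paper: represent $F$ via Theorem~\ref{ThpLLL}, bound $\|\omega\|_{L^\infty}$ by $C(p,q)M$ using the mapping properties of $T$ and the invertibility of $I-\hat o_1 S$, apply Proposition~\ref{prop:key prop} to $h\circ\phi$, and use Caccioppoli (Lemma~\ref{cacciWE}) both to bound $\|F\|_{L^2(B_7)}$ from above and to convert $\|F\|_{L^2(B_{r/2})}$ into $\|v\|_{L^\infty(B_r)}$. The only cosmetic difference is that the paper substitutes the Caccioppoli bounds directly into the three-circle inequality to obtain \eqref{EsTTima} and then solves for $\|v\|_{L^\infty(B_r)}$ in one step, whereas you first isolate $\|F\|_{L^2(B_{r/2})}$ and then apply Caccioppoli a second time; the algebra and the final bookkeeping with $1/\theta=\log(E'/r)/E$ are identical.
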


\begin{proof}
	Set
	\begin{equation*}
	\delta=
	\begin{cases}
	q & \text{if } q< 1+\frac{1}{k} \\
	\frac{3k+1}{2k} & \text{otherwise, } 
	\end{cases}
	\end{equation*}	
	since $\max\{2,p\}<q$ and $0\leq k<1$, $\delta\in (2,1+\frac{1}{k})$.
	
	Recall that, $F= \abs{G}^{\frac{p-2}{2}}G$, where $G= v_x - iv_y $ and that $v$ satisfies \eqref{PLapLOGT}. 
	Due to Theorem \ref{ThpLLL}, the nonlinear function $F$ satisfies the following quasilinear Beltrami equation
	\[
	\frac{\doo F}{\doo\overline{z}} = q_1 \frac{\doo F}{\doo z} + q_2 \overline{\frac{\doo F}{\doo z}} + q_3 F,
	\]
	where $q_1, q_2, q_3$ are Beltrami coefficients given as in Theorem \ref{ThpLLL}. Moreover, each solution of this Beltrami equation is of the form
	\[
	F(z) = h(\phi(z))e^{\omega(z)} \  \ \text{in}\ B_8
	\]  
	with $\phi : B_8 \rightarrow \phi(B_8)$ is a $K$-quasiconformal mapping with $K$ depending only on $p$, $h:\phi(B_8)\to \R^2$ is holomorphic, and $\omega$ is the Cauchy transform of certain $L^\delta$-function $g$ that solves \eqref{integral}.
	
	By the Caccioppoli's inequality, Lemma \ref{cacciWE}, we have 
	\[
	\begin{split}
	\int_{B_r}\abs{\nabla v}^p dx 
	&\leq \frac{C\|A\|_{L^p(B_8)}^{p}}{(\rho-r)^p} \|v\|_{L^{\infty}(B_{\rho})}^{p}  \\ 
	&\leq \frac{C\|A\|_{L^q(B_8)}^{p}}{(\rho-r)^p} \|v\|_{L^{\infty}(B_{\rho})}^{p}, \  \ 0<r<\rho<8.
	\end{split}
	\]
	Since $\|A\|_{L^q(B_8)} \leq M$, $M\geq 1$, we have
	\begin{equation}\label{proofCaaci}
	\int_{B_r}\abs{\nabla v}^p dx \leq \frac{CM^p}{(\rho-r)^p} \|v\|_{L^{\infty}(B_{\rho})}^{p}.
	\end{equation}
	
	Note that, the Cauchy transformation $T(g(z)) = \omega(z)$ is a bounded linear operator from $L^\delta(B_8)$ to $L^{\infty}(B_8)$ for $2<\delta<\infty$ (cf.~\cite[Section 4.3.2]{Astala:Iwaniec:Martin}) and that the Beltrami operator $I-\hat{o}_1S$ has quantitative bounded inverse norm from $L^\delta(B_8)$ to $L^\delta(B_8)$ (cf.~\cite[Theorem 1]{Astala:Iwaniec:Saksman:2001}) for all $\delta\in (2,1+\frac{1}{k})$. It follows that $$\|\omega\|_{L^\infty(B_8)}\leq c(q)\|g\|_{L^\delta(B_8)}\leq c(q,k)\|q_3\|_{L^\delta(B_8)}\leq c(q,k)\|A\|_{L^q(B_8)}\leq c(q,k)M.$$
	Now, using the Caccioppoli's inequality above, we have
	\[
	\begin{split}
	\|h\circ\phi\|_{L^2(B_{r/2})}^{2} 
	& = \int_{B_{r/2}}\abs{F}^2\abs{e^{-\omega(z)}}^2 \\
	& \leq C e^{2CM} \int_{B_{r/2}} \abs{\nabla v}^p \\
	& \leq e^{2CM} \frac{C^2M^p}{(r/2)^p} \|v\|_{L^{\infty}(B_r)}^{p},
	\end{split}
	\]
	where $C$ is a positive constant depending only on $p$ and $q$ (since $k$ depends only on $p$). Similarly,
	\[
	\|h\circ\phi\|_{L^2(B_7)}^{2} \leq C^2 e^{2CM} M^p \|v\|_{L^{\infty}(B_8)}^{p}.
	\]
	Since $h$ is holomorphic and $\phi$ is $K$-quasiconformal, the composition $h\circ\phi$ is $K$-quasiregular. Replacing $\phi$ by $h\circ\phi$ in Proposition \ref{prop:key prop} yields
	\begin{align*}
	\|h\circ\phi\|_{L^\infty(B_1)}^{2}\leq C\big(r^{-2}\|h\circ\phi\|_{L^2(B_{r/2})}^2\big)^{\theta}\|h\circ\phi\|_{L^2(B_7)}^{2(1-\theta)},
	\end{align*} 
	with $0<\theta<1$ satisfying $\theta=\frac{E}{\log\frac{E'}{r}}$ and $r\in (0, \frac{1}{4})$, where $E, E'$ are the positive constants depending only on $K$ (and thus only on $p$). As a consequence,
	\begin{equation}\label{EsTTima}
	e^{-2CM}\|\nabla v\|_{L^{\infty}(B_1)}^{p}
	\leq C \left[r^{-2}e^{2CM}\frac{C^2M^p}{r^p}\|v\|_{L^{\infty}(B_r)}^{p}\right]^{\theta} \left[C^2e^{2CM}M^p\|v\|_{L^{\infty}(B_8)}^{p} \right]^{1-\theta}.  
	\end{equation}
	Recall that we have the following assumptions on $v$:
	\begin{itemize}
		\item[(1)]
		$\abs{v(z)} \leq C_0$ for all $z\in B_8$. \\
		\item[(2)] $\sup_{z\in B_1} \abs{\nabla v(z)} \geq 1$. \\
		\item[(3)] $v\in W_{loc}^{1,p}(B_8)$.
	\end{itemize}
	Therefore, the estimate \eqref{EsTTima} becomes,
	\[
	\|v\|_{L^{\infty}(B_r)} \geq \frac{r^{1+\frac{2}{p}}e^{-\frac{4CM}{\theta p}}}{[MC^{\frac{1}{p}}C_0^{1-\theta}]^{\frac{1}{\theta}}}.
	\]
	Substituting the explicit expression of $\theta = \frac{E}{\log(\frac{E'}{r})}$ in the above estimate, we obtain that
	\[
	\|v\|_{L^{\infty}(B_r)} \geq C_{0}^{C_1} r^{C_2\log C_0} r^{C_3M},
	\]
	where $C_1, C_2, C_3$ are positive constants depending only on $p$ and $q$.	
\end{proof}

With the aid of Theorem \ref{CaseiThe}, we are ready to prove our main result of this section. The idea of the proof is similar to that used in~\cite[Corollary 3.3]{Kenig:Wang:2015}.

\begin{theorem}\label{mainpq2}
	Let $u\in W_{loc}^{1,p}(\mathbb{R}^2)$, $1<p<\infty$, be a real solution of $$\dive(\abs{\nabla u}^{p-2}\nabla u) + W\cdot(\abs{\nabla u}^{p-2}\nabla u) = 0 \ \text{in} \ \mathbb{R}^2.$$ Assume also that $\|W\|_{L^q(\mathbb{R}^2)}\leq \tilde{M}$ with  $q > \max \{2,p\}$ and $\abs{\nabla u(0)} = 1$.
\begin{itemize}
\item[(i)] If $u$ satisfies $\abs{u(z)}\leq C_0$, then
	\begin{equation}\label{constbound}
	\inf_{\abs{z_0}=R}\sup_{\abs{z-z_0}<1} \abs{u(z)} \geq e^{-CR^{1-\frac{2}{q}}\log R}
	\end{equation}
	for $R\gg 1$, where $C$ depends only on $p$, $q$, $\tilde{M}$ and $C_0$.
\item[(ii)] If $u$ satisfies $\abs{u(z)}\leq\abs{z}^m$ for $\abs{z}>1$ with some $m>0$, then the similar estimate \eqref{constbound} also holds true.
\end{itemize}
\end{theorem}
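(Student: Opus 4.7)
The plan is to reduce the global asymptotic bound to the local vanishing-order estimate of Theorem~\ref{CaseiThe} through a Bourgain-Kenig rescaling, and then transport that bound from a neighborhood of the origin out to an arbitrary point at distance $R$ by a Hadamard three-sphere propagation.

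First I would fix $z_0$ with $|z_0|=R \gg 1$ and define $v(z) := u(Rz)/\Lambda$ on $B_8$, with $\Lambda = C_0$ in case (i) and $\Lambda = R^m$ in case (ii); in the second case the local $C^{1,\alpha}$-regularity of $u$ combined with $|u(z)| \leq |z|^m$ for $|z|>1$ yields a uniform bound $|u(w)| \leq C_m(1+|w|)^m$, so that $\|v\|_{L^\infty(B_8)} \leq C_m$ uniformly in $R$. A routine computation shows that $v$ solves
\[ \dive(|\nabla v|^{p-2}\nabla v) + A\cdot(|\nabla v|^{p-2}\nabla v) = 0 \ \ \text{in } B_8, \]
with $A(z) = R W(Rz)$ satisfying $\|A\|_{L^q(B_8)} \leq R^{1-2/q}\tilde M =: M$ by the scaling of the $L^q$ norm, and the normalization $|\nabla u(0)|=1$ transfers to $|\nabla v(0)| = R/\Lambda \geq 1$ for $R$ large (in case (ii) with $m>1$, a polynomial loss of $R^{1-m}$ is absorbed into the $C_0$-dependence in Theorem~\ref{CaseiThe}, contributing only a lower-order $(\log R)^2$ term to the final exponent). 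Applying Theorem~\ref{CaseiThe} at scale $r = 1/R$ then gives
\[ \sup_{B_{1/R}}|v| \geq \exp\bigl(-CR^{1-2/q}\tilde M \log R\bigr). \]

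Next I would propagate this lower bound from $B_{1/R}(0)$ to the target ball $B_{1/R}(z_0/R)$. To this end I would derive a three-sphere (Hadamard-type) inequality for $v$ at an arbitrary center $z^* \in B_6$,
\[ \|v\|_{L^\infty(B_r(z^*))} \leq e^{CM}\,\|v\|_{L^\infty(B_{r/2}(z^*))}^{\theta}\,\|v\|_{L^\infty(B_{2r}(z^*))}^{1-\theta}, \]
by re-running the proof of Theorem~\ref{CaseiThe} with balls recentered at $z^*$: Theorem~\ref{ThpLLL}'s factorization $F = h(\phi)e^\omega$ is translation invariant, Proposition~\ref{prop:key prop} applies to the quasiregular mapping $h\circ\phi$ at the shifted base, Caccioppoli's inequality (Lemma~\ref{cacciWE}) relates $v$ and $F = |\nabla v|^{(p-2)/2}\nabla v$, and the $L^q$--$L^\infty$ boundedness of the Cauchy transform absorbs $e^\omega$ into the prefactor $e^{CM}$. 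I would then chain this inequality along a finite sequence of overlapping balls linking a neighborhood of $0$ to a neighborhood of $z_0/R$, balancing the chain radii so that the accumulated $\theta^{-1}$ factors and $e^{CM}$ factors stay on the right scale.

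Combining the two steps yields $\sup_{|z-z_0/R|<1/R}|v(z)| \geq \exp(-CR^{1-2/q}\log R)$, and undoing the rescaling via $\sup_{|w-z_0|<1}|u(w)| = \Lambda\sup_{|z-z_0/R|<1/R}|v(z)|$ gives the claimed bound; the polynomial factor $\Lambda$ is harmless since $\log\Lambda = O(\log R)$ is of lower order than $R^{1-2/q}\log R$ for $q > 2$. The main obstacle I anticipate is the chain in the propagation step: a careful choice of chain radii (avoiding the pathological behavior of $\theta(r) = E/\log(E'/r)$ at small $r$) and a delicate balance between the Caccioppoli bounds and the $L^q$--$L^\infty$ estimate for the Cauchy transform are needed to ensure that the propagation reproduces exactly the exponent $R^{1-2/q}\log R$ in the statement, rather than an inflated one.
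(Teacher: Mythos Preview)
Your rescaling is centered at the wrong point, and this is what creates the entire ``propagation'' obstacle. The paper rescales around $z_0$, not around the origin: set $u_R(z)=u(Rz+z_0)$. Then $u_R$ solves the equation in $B_8$ with drift $W_R(z)=RW(Rz+z_0)$, and $\|W_R\|_{L^q(B_8)}\le \tilde M R^{1-2/q}$. The point $0$ in the original coordinates is sent to $-z_0/R\in\overline{B_1}$, so the normalization $|\nabla u(0)|=1$ becomes $|\nabla u_R(-z_0/R)|=R\ge 1$, which is exactly the hypothesis $\sup_{B_1}|\nabla v|\ge 1$ in Theorem~\ref{CaseiThe}. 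Applying that theorem with $r=1/R$ and $M=\tilde M R^{1-2/q}$ gives a lower bound on $\|u_R\|_{L^\infty(B_{1/R})}$, and $B_{1/R}(0)$ in the rescaled picture is precisely $B_1(z_0)$ in the original one. The proof is finished in one step; no chain of three-sphere inequalities is needed. For part~(ii) one simply takes $C_0=(9R)^m$ and notes that $\log C_0=O(\log R)$, which is lower order than $R^{1-2/q}\log R$.

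By contrast, your rescaling $v(z)=u(Rz)/\Lambda$ puts the gradient information at $0$ and the target ball at $z_0/R$, forcing a transfer across distance $1$ between two balls of radius $1/R$. The three-sphere inequality you write down for $v$ itself does not follow from the ingredients you cite: Proposition~\ref{prop:key prop} is a three-circle bound for the quasiregular map $h\circ\phi=Fe^{-\omega}$, and Caccioppoli (Lemma~\ref{cacciWE}) only gives $\|F\|_{L^2(B_{r/2})}\lesssim \|v\|_{L^\infty(B_r)}$, i.e.\ an upper bound in one direction. There is no reverse inequality bounding $\|v\|_{L^\infty}$ by $\|F\|$ without additional input (e.g.\ knowing that $v$ vanishes somewhere), so you cannot close a clean three-sphere for $v$. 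Even granting such an inequality, a chain with $O(R)$ small balls or with $\theta(r)\sim 1/\log(1/r)$ would typically inflate the exponent beyond $R^{1-2/q}\log R$. The fix is not to refine the chain but to recenter the rescaling at $z_0$.
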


\begin{proof}
(i). Similar as in~\cite[Corollary 2.2]{Kenig:Wang:2015}, we use the scaling argument from \cite{Bourgain:Kenig:2005}. Let $\abs{z_0} = R$, $R\gg 1$ and we define $u_R(z) = u(Rz + z_0)$. Then $u_R$ satisfies the following weighted $p$-Laplace equation with lower order gradient term:
	\[
	\dive(\abs{\nabla u_R}^{p-2}\nabla u_R) +  W_R\cdot(\abs{\nabla u_R}^{p-2} \nabla u_R) = 0 \ \text{in}\ B_8,
	\]
	where $W_R(z) = RW(Rz+z_0)$. Since $q>\max\{2,p\}$, we have
	\begin{align}
	\left(\int_{B_8}\abs{W_R}^q\right)^{\frac{1}{q}} 
	& \leq R^{1-\frac{2}{q}} \left(\int_{\mathbb{R}^2} \abs{W}^q\right)^{\frac{1}{q}} \nonumber \\
	& \leq \tilde{M} R^{1-\frac{2}{q}}.
	\end{align}
	Note that
	\[
	\abs{\nabla u_R(-\frac{z_0}{R})} = R\abs{\nabla u(0)} = R >1.
	\]
	The desired estimate follows by applying \eqref{esrir} with $M=\tilde{M}R^{1-\frac{2}{q}}$ and $r=R^{-1}$.

	(ii). The proof is similar to part (i). The only difference is that, in this case, we use the polynomial boundedness assumption on the solution $u_R$. Our assumption on $u_R$ implies that,
	\[
	\|u_R\|_{L^{\infty}(B_8)} \leq 9^m R^m
	\]
	i.e., $C_0 = 9^mR^m$. Therefore the estimate \eqref{constbound} can be achieved by replacing $C_0 = 9^mR^m, M=\tilde{M}R^{1-\frac{2}{q}}$ and $r=R^{-1}$ in \eqref{esrir}. 
\end{proof}

As an immediate consequence of Theorem \ref{CaseiThe}, we are now able to prove the SUCP for~\eqref{eq:main equation}.

\begin{proof}[Proof of Theorem~\ref{thm:SUCP}]
	Using translation and scaling if necessary, we may assume that $z_0=0$ and $B_8\subset\Omega$. Note that $\|A\|_{L^\infty(B_8)}=M<\infty$. We prove the claim by a contradiction argument. If $v(z)\not\equiv v(0)$ in $B_1$, then $\sup_{z\in B_1}|\nabla u(z)|\geq C$ for some $C>0.$
	Therefore, applying Theorem \ref{CaseiThe} with $q=\infty$, we infer that $v(z)-v(0)$ can not vanish at $0$ in an infinite order. Hence $v(z)\equiv v(0)$ in $B_1$. Using the standard chain of balls argument, we thus conclude that $v(z)\equiv v(0)$ in $\Omega$.
\end{proof}

\section{SUCP II: $q=\max\{p,2\}$}\label{sec:q=p 2}
In this section, we deal with the case $q=\max\{p,2\}$. It should be noticed that if $2<p<\infty$, then $q=p$ and we may apply the exact argument as in the previous section to show the quantitative uniqueness estimates. Thus, throughout this section, we will assume that $1<p\leq2$. In particular, this implies that $q=2$. As before, we assume that $\|A\|_{L^2(B_8)}\leq M$ for some constant $M>1$.

The main difficulty we face in this case is that the Cauchy transform is not a bounded operator from $L^2(B_8)$ to $L^{\infty}(B_8)$. Instead, we will use the fact that it is a bounded operator from $L^2(B_8)$ to $W^{1,2}(B_8)$. This fact, together with certain Trudinger type inequality from~\cite{Kenig:Wang:2015}, will enable us to repeat the previous arguments to conclude the proof.

\subsection{Auxiliary result}

We will need the following upper Harnack type estimate for quasiregular mappings.

\begin{lemma}\label{lemma:upper Harnack}
	If $\phi:B_8\to \R^2$ is a $K$-quasiregular mapping, then for each $r\in (0,1)$ we have
	\[
	\|\phi\|_{L^{\infty}(B_{r/2})} \leq C\frac{1}{\abs{B_r}}\int_{B_r}\abs{\phi}dx,
	\]
	where the constant $C>0$ depends only on $K$.
\end{lemma}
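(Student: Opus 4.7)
My plan is to reduce the estimate to a standard interior boundedness result for $\mathcal{A}$-harmonic functions. Writing $\phi = (\phi_1, \phi_2)$, recall from the proof of Proposition~\ref{prop:key prop} that each component $\phi_i$ is $\mathcal{A}$-harmonic of type 2 in $B_8$, where the structure constants of $\mathcal{A}$ depend only on $K$. Because $\mathcal{A}$ is $1$-homogeneous in the gradient variable, a standard argument (approximate $u^+$ by $(u-\varepsilon)^+$ and use the admissibility of $\min(u^+/\varepsilon, 1)\psi$ as a test function) shows that both $\phi_i^+$ and $\phi_i^-$ are nonnegative $\mathcal{A}$-subsolutions; hence so is $|\phi_i|$. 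This is the content of e.g.\ the relevant lemmas in Chapter~3 of~\cite{Heinonen:Kilpelainen:Martio:1993}.

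For any nonnegative $\mathcal{A}$-subsolution $v$ in $B_8$ and any $r \in (0,1)$, the classical Moser iteration yields the local $L^\infty$--$L^s$ estimate
\[
\sup_{B_{r/2}} v \;\leq\; C_s \Big( \frac{1}{|B_r|} \int_{B_r} v^s \, dx \Big)^{1/s}, \qquad s > 0,
\]
with $C_s$ depending only on $K$ and $s$. Applying this with $v = |\phi_i|$ and $s = 1$, then summing over $i = 1, 2$ and using the elementary inequalities $|\phi_i| \leq |\phi| \leq |\phi_1| + |\phi_2|$, gives
\[
\|\phi\|_{L^\infty(B_{r/2})} \;\leq\; \frac{C}{|B_r|} \int_{B_r} |\phi| \, dx,
\]
which is the desired conclusion.

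The only mildly delicate step is that Moser iteration is usually phrased as an $L^\infty$--$L^2$ estimate, whereas here I need the $L^\infty$--$L^1$ version. This is overcome by the standard self-improvement trick: starting from the $L^\infty$--$L^2$ inequality on a chain of nested balls $B_{r/2} \subset B_{\rho_1} \subset B_{\rho_2} \subset \cdots \subset B_r$ with the constant depending polynomially on the annular width, one interpolates via H\"older's inequality $\|\phi_i\|_{L^2} \leq \|\phi_i\|_{L^\infty}^{1/2} \|\phi_i\|_{L^1}^{1/2}$, absorbs the $L^\infty$ factor into the left-hand side using a standard iteration lemma, and obtains the $L^\infty$--$L^1$ estimate. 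No genuine obstacle arises here; the lemma should be viewed as a packaging of well-known regularity facts for quasiregular mappings in a form convenient for the applications in the next subsection.
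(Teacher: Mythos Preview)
Your proposal is correct and follows essentially the same route as the paper: both arguments reduce to the fact that the components $\phi_i$ are $\mathcal{A}$-harmonic of type $2$ and then invoke the standard local $L^\infty$--$L^1$ estimate from~\cite[Section 3]{Heinonen:Kilpelainen:Martio:1993}. The paper simply cites that estimate directly, while you spell out the underlying mechanism (subsolution property of $\phi_i^\pm$, Moser iteration, and the $L^2\to L^1$ self-improvement); one small quibble is that ``hence so is $|\phi_i|$'' is not immediate from additivity for nonlinear $\mathcal{A}$, but it does follow since $|\phi_i|=\max(\phi_i,-\phi_i)$ and the max of two subsolutions is a subsolution, or alternatively by applying the Moser bound separately to $\phi_i^+$ and $\phi_i^-$.
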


\begin{proof}
	As noticed in the proof of Proposition~\ref{prop:key prop}, the component functions $\phi_i=x_i\circ \phi$, $i=1,2$, are $\mathcal{A}$-harmonic of type 2. Thus, the standard regularity theory of elliptic PDEs (cf.~\cite[Section 3]{Heinonen:Kilpelainen:Martio:1993}) implies that for each $i=1,2$, we have
	\begin{align*}
	\|\phi_i\|_{L^{\infty}(B_{r/2})} \leq C\frac{1}{\abs{B_r}}\int_{B_r}\abs{\phi_i}dx.
	\end{align*}
	The claim follows by summing up the above estimates.
\end{proof}

\subsection{Proofs of the main results}

\begin{theorem}\label{QUCP_q=2}
	Let $v\in W_{loc}^{1,p}(B_8), 1<p\leq 2$, be a real solution of \eqref{PLapLOGT}.
	If $\abs{v(z)} \leq C_0, (C_0\geq 1)$ for all $z\in B_8$, and $\|\nabla v\|_{L^p(B_{6/5})}\geq 1$, then
	\begin{equation}\label{bounDq2}
	\|v\|_{L^{\infty}(B_r)} \geq C_1 C_0^{-C_2} r^{C_3\log C_0} r^{C_4M^2}
	\end{equation}
	for all  $0<r\ll 1$, where the positive constants $C_1, C_2, C_3, C_4$ depend only on $p$.
\end{theorem}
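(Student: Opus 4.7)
The plan is to mimic the proof of Theorem~\ref{CaseiThe}, adjusting the two steps that genuinely fail at the critical exponent $q=2$. As before, set $G = v_x - iv_y$ and $F = \abs{G}^{(p-2)/2}G$. Since $q=2 < 1+1/k$ when $1<p\leq 2$, Remark~\ref{rmk:on the k} applies and Theorem~\ref{ThpLLL} produces the factorization $F(z) = h(\phi(z))e^{\omega(z)}$ with $\phi\colon B_8\to\phi(B_8)$ a $K$-quasiconformal map, $h$ holomorphic on $\phi(B_8)$, and $\omega = T(g)$ for some $g\in L^2(B_8)$ satisfying $\|g\|_{L^2(B_8)} \leq CM$. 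The first essential difference from Theorem~\ref{CaseiThe} is that $T$ no longer maps into $L^\infty$; however $T$ is continuous from $L^2(B_8)$ into $W^{1,2}(B_8)$, giving $\|\omega\|_{W^{1,2}(B_8)} \leq CM$, and a Trudinger-type inequality, applied as in~\cite{Kenig:Wang:2015}, then yields
\[
\|e^{\pm\omega}\|_{L^s(B_8)} \leq C\exp(CM^2), \qquad s\in[1,\infty),
\]
with $C$ depending only on $s$. This Trudinger step is the sole source of the quadratic $M^2$ (as opposed to $M$) in the exponent of~\eqref{bounDq2}.

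Without $L^\infty$ control on $e^{-\omega}$, the $L^2$ bounds on $h\circ\phi = Fe^{-\omega}$ required by Proposition~\ref{prop:key prop} are no longer directly available. H\"older's inequality and the Trudinger bound produce only the $L^1$ estimate
\[
\|h\circ\phi\|_{L^1(B_\rho)} \leq \|F\|_{L^2(B_\rho)}\,\|e^{-\omega}\|_{L^2(B_\rho)} \leq Ce^{CM^2}\|F\|_{L^2(B_\rho)},
\]
which combined with Caccioppoli's inequality (Lemma~\ref{cacciWE}) controls $\|h\circ\phi\|_{L^1}$ in terms of $\|v\|_{L^\infty}$. To upgrade this to $L^2$, I will invoke Lemma~\ref{lemma:upper Harnack}, the upper Harnack estimate for the $K$-quasiregular map $h\circ\phi$, which converts $L^1$ on $B_r$ into $L^\infty$ on $B_{r/2}$ at the cost of a factor $r^{-2}$. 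Applied on the appropriate two scales this yields
\[
\|h\circ\phi\|_{L^2(B_{r/2})} \leq Ce^{CM^2}M^{p/2}r^{-(2+p/2)}\|v\|_{L^\infty(B_{2r})}^{p/2}, \qquad \|h\circ\phi\|_{L^2(B_7)} \leq Ce^{CM^2}M^{p/2}C_0^{p/2}.
\]

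For the lower bound on $h\circ\phi$, the pointwise hypothesis $\sup\abs{\nabla v}\geq 1$ of Theorem~\ref{CaseiThe} is no longer at hand, so I will use the integral hypothesis $\|\nabla v\|_{L^p(B_{6/5})}\geq 1$ instead. Since $\int\abs{\nabla v}^p = \int\abs{F}^2$, H\"older and Trudinger give
\[
1 \leq \|F\|_{L^2(B_{6/5})}^2 \leq \|h\circ\phi\|_{L^\infty(B_{6/5})}^2\,\|e^\omega\|_{L^2(B_{6/5})}^2 \leq Ce^{CM^2}\|h\circ\phi\|_{L^\infty(B_{6/5})}^2,
\]
so $\|h\circ\phi\|_{L^\infty(B_{6/5})}\geq Ce^{-CM^2}$. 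Proposition~\ref{prop:key prop} (with $B_1$ replaced by $B_{6/5}$, a cosmetic change in constants) then gives
\[
Ce^{-CM^2} \leq \bigl(r^{-C}e^{CM^2}M^{p/2}\|v\|_{L^\infty(B_{2r})}^{p/2}\bigr)^\theta\bigl(e^{CM^2}M^{p/2}C_0^{p/2}\bigr)^{1-\theta},
\]
where $\theta = E/\log(E'/r)$. Isolating $\|v\|_{L^\infty(B_{2r})}$ and using the explicit form of $\theta$ to convert factors of $e^{CM^2/\theta}$ into $r^{CM^2}$ and factors of $C_0^{1/\theta}$ into $r^{\log C_0}$ produces the claimed bound~\eqref{bounDq2} (after the harmless rescaling $r\mapsto r/2$).

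The main obstacle is the failure of $T\colon L^2\to L^\infty$: each passage between $F$ and $h\circ\phi$ now costs an $e^{CM^2}$ factor via Trudinger, and since $1/\theta\sim\log(1/r)$, these exponential losses compound into the factor $r^{C_4M^2}$, which is quantitatively weaker than the $r^{C_3 M}$ obtainable when $q>2$. A secondary technical point is that Proposition~\ref{prop:key prop} requires $L^2$ input whereas H\"older--Trudinger only delivers $L^1$; Lemma~\ref{lemma:upper Harnack} is what quantitatively bridges this gap for the quasiregular map $h\circ\phi$, and without a Harnack-type estimate of this form the argument would stall.
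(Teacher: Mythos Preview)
Your proposal is correct and follows essentially the same route as the paper: factorize $F$, replace the failed $L^2\to L^\infty$ bound on the Cauchy transform by the $L^2\to W^{1,2}$ bound plus a Trudinger-type exponential estimate for $e^{|\omega|}$, and combine the three-circle theorem with the Harnack estimate of Lemma~\ref{lemma:upper Harnack} and Caccioppoli. The paper differs only cosmetically---it applies the $L^\infty$ three-circle inequality~\eqref{eq:three circle} directly and then uses Lemma~\ref{lemma:upper Harnack} to pass to an $L^1$ average (rather than upgrading $L^1$ to $L^2$ first and invoking Proposition~\ref{prop:key prop} as stated), and it quotes the localized estimate \cite[Lemma~3.3]{Kenig:Wang:2015} carrying an extra $r^{-CM}$ factor rather than your global Trudinger bound on $B_8$.
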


\begin{proof}
	Recall that, $F$ satisfies the following Beltrami equation
	\begin{equation}\label{Beltrami_q=2}
	\frac{\doo F}{\doo\bar{z}} = q_1 \frac{\doo F}{\doo z} + q_2 \overline{\frac{\doo F}{\doo z}} + q_3 F \ \text{in}\ B_8,
	\end{equation}
	where $k=\|q_1\|_{L^\infty(B_8)}+\|q_2\|_{L^\infty(B_8)}<1$ with $k=\frac{2-p}{3p-2}$ and $\|q_3\|_{L^2(B_8)} \leq C M$, where $C$ depends on $p$. The solution of \eqref{Beltrami_q=2} is represented by
	\[
	F(z) = (h\circ\phi)(z) e^{-\omega(z)} \  \  \text{in}\ B_8,
	\]
	where $h$ is holomorphic in $\phi(B_8)$, $\phi$ is $K$-quasiconformal and 
	\[
	-\omega(z) = (Tg)(z) = -\frac{1}{\pi} \int_{B_8}\frac{g(\xi)}{\xi-z} d\xi, \  \ g\in L^2(B_8),
	\]
	see for instance the proof of Theorem \ref{ThpLLL}. Due to the fact that Cauchy transform is bounded from $L^2(B_8)$ to $W^{1,2}(B_8)$ (cf.~\cite[Section 4.3.2]{Astala:Iwaniec:Martin}) and that the operator $I-\hat{o}_1S$ is invertible from $L^2(B_8)$ into $L^2(B_8)$, we obtain
	\[
	\|\omega\|_{W^{1,2}(B_8)} \leq C\|g\|_{L^2(B_8)} \leq C\|q_3\|_{L^2(B_8)} \leq CM.
	\]
	In the second inequality we used the property that, the following integral equation, see \eqref{integral},
	\[
	\left( I-\hat{o}_1S\right)g = \hat{o}_2, \ \text{in}\ \mathbb{R}^2,
	\]
	has a unique solution in $L^2(\mathbb{R}^2)$ and the solution is bounded by $q_3$ in terms of the $L^2(\mathbb{R}^2)$ norm. With the above representation of the solution and the auxiliary lemmata at hand, we proceed our proof similar to \cite[Section 3]{Kenig:Wang:2015}.
	
	 Applying the Hadamard's three circle theorem (cf.~the proof of Proposition \ref{prop:key prop}) to the $K$-quasiregular mapping $(h\circ\phi)(z) = F(z)e^{\omega(z)}$, we have
	\begin{equation}\label{hadamarda}
	\|Fe^{\omega}\|_{L^{\infty}(B_{6/5})} \leq \|Fe^{\omega}\|_{L^{\infty}(B_{r/4})}^{\theta} \|Fe^{\omega}\|_{L^{\infty}(B_2)}^{1-\theta},
	\end{equation}
	where
	\begin{equation}\label{formtheta}
	0<\theta=\theta(r) = \frac{\log\frac{2}{2-(\frac{4}{5C_0})^{1/\alpha}}}{\log\frac{2}{(\frac{r}{4C_0})^{1/\alpha}}}<1, \quad \alpha\in (0, 1],
	\end{equation}
	with $C_0$ being a positive constant depending on $K$ (which is in particular depending on $p$). Using Lemma \ref{lemma:upper Harnack} and H\"older's inequality, we obtain that
	\begin{align*}
	\|Fe^{\omega}\|_{L^{\infty}(B_{r/4})} 
	& =\|h\circ\phi\|_{L^{\infty}(B_{r/4})} \\
	& \leq \frac{C}{\abs{B_{r/2}}} \int_{B_{r/2}}\abs{F(z)e^{\omega(z)}} dz \\
	& \leq C \left( \frac{1}{\abs{B_{r/2}}} \int_{B_{r/2}} e^{2\abs{\omega}}\right)^{1/2}
	\left( \frac{1}{\abs{B_{r/2}}} \int_{B_{r/2}} \abs{F}^2\right)^{1/2}.
	\end{align*}
	Note that, by~\cite[Lemma 3.3]{Kenig:Wang:2015}, we have the following integral estimate:
	\[
	\frac{1}{\abs{B_{r/2}}} \int_{B_{r/2}} e^{2\abs{\omega}} \leq C r^{-2CM} e^{2CM^2}.
	\]
	 Therefore, applying Caccioppoli's inequality, Lemma \ref{cacciWE}, we conclude that
	\begin{align*}
	\|Fe^{\omega}\|_{L^{\infty}(B_{r/4})} 
	& \leq C r^{-(C_1+CM)} e^{{CM}^2} \left(\int_{B_{r/2}} \abs{\nabla v}^p\right)^{1/2}, \ \ 1<p\leq 2, \\
	& \leq C r^{-(C_1+CM)} e^{{CM}^2} \frac{\|A\|_{L^p(B_8)}^{p/2}}{(r/2)^{p/2}} \|v\|_{L^{\infty}(B_r)}^{p/2} \\
	&\leq C M r^{-(C_1+CM)} e^{{CM}^2}\|v\|_{L^{\infty}(B_r)}^{p/2},
	\end{align*}
	where the constant $C_1>0$ depends only on $p$. 
	Substituting the above estimate in \eqref{hadamarda}, the right hand side of the Hadamard's three circle theorem becomes
	\begin{align}
	\|Fe^{\omega}\|_{L^{\infty}(B_{r/4})}^{\theta} \|Fe^{\omega}\|_{L^{\infty}(B_{2})}^{1-\theta} 
	\leq &\left( C M r^{-(C_1+CM)} e^{{CM}^2}\|v\|_{L^{\infty}(B_r)}^{p/2}\right)^{\theta} \nonumber \\
	&\times \left( C M 8^{-(C_1+CM)} e^{{CM}^2}\|v\|_{L^{\infty}(B_8)}^{p/2}\right)^{1-\theta}. \label{hadaMCaccio}
	\end{align}
	On the other hand, we have
	\begin{align}
	1 \leq \|\nabla v\|_{L^p(B_{6/5})}^p \nonumber
	& = \int_{B_{6/5}} \abs{F}^2 \nonumber \\
	& \leq \int_{B_{6/5}} \abs{h\circ\phi}^2 \abs{e^{2\abs{\omega}}} \nonumber\\
	& \leq \left( \int_{B_{6/5}} e^{4\abs{\omega}}\right)^{1/2} \|h\circ\phi\|_{L^4(B_{6/5})}^{2} \nonumber \\ 
	& \leq Ce^{2CM^2} \|h\circ\phi\|_{L^4(B_{6/5})}^{2} \nonumber \\
	& \leq Ce^{2CM^2} \|h\circ\phi\|_{L^{\infty}(B_{6/5})}^{2}. \label{esiMaTe}
	\end{align}
	Finally, our claim follows by combining the estimates \eqref{hadamarda}, \eqref{hadaMCaccio}, \eqref{esiMaTe} and the form of $\theta$ (cf.~\eqref{formtheta}).
\end{proof}

\begin{theorem}\label{thm:q=2}
	Let $u\in W_{loc}^{1,p}(\mathbb{R}^2), \ 1<p\leq 2,$ be a real solution of the equation
	\[
	\dive(\abs{\nabla u}^{p-2}\nabla u) + W \cdot(\abs{\nabla u}^{p-2}\nabla u) = 0 \ \text{in}\  \mathbb{R}^2.
	\]
	Assume that $\|W\|_{L^2(\mathbb{R}^2)} \leq \ \tilde{M}$ and $\|\nabla u\|_{L^p(B_1)}\geq 1$.
	\begin{itemize}
	\item[(i)] If $u$ satisfies $\abs{u(z)}\leq C_0$ for some $C_0>0$, then
	\[
	\inf_{\abs{z_0}=R} \sup_{\abs{z-z_0}<1} \abs{u(z)} \geq R^{-C},
	\]
	for $R\gg1$, where $C>0$ depends only on $\tilde{M}, p$ and $C_0$.
\item[(ii)] If $u$ satisfies $\abs{u(z)}\leq \abs{z}^m$ for $\abs{z}>1$ with some $m>0$. Then
\[
\inf_{\abs{z_0}=R} \sup_{\abs{z-z_0}<1} \abs{u(z)} \geq C_1 e^{-C_2(\log R)^2},
\]
for $R\gg1$, where $C_1>0$ depends only on $m, p$ and $C_2>0$ depends only on $m, p, \tilde{M}$. 
\end{itemize}
\end{theorem}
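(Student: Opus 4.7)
The plan is to deduce both parts from Theorem~\ref{QUCP_q=2} via the Bourgain--Kenig scaling argument already used in the proof of Theorem~\ref{mainpq2}, exploiting the fact that the $L^2$-norm of the drift is invariant under the planar rescaling $z\mapsto Rz+z_0$. Unlike the pointwise gradient hypothesis in Theorem~\ref{mainpq2}, the integral condition $\|\nabla u\|_{L^p(B_1)}\ge 1$ combined with $1<p\le 2$ forces an additional normalization step, which will be the principal source of technical care.

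First, for $|z_0|=R\gg 1$, I would set $u_R(z)=u(Rz+z_0)$ on $B_8$. A direct computation shows that $u_R$ solves \eqref{PLapLOGT} with drift $W_R(z)=RW(Rz+z_0)$. Change of variables together with $B_{8R}(z_0)\subset\mathbb R^2$ yields the scale-invariant bound $\|W_R\|_{L^2(B_8)}^2=\int_{B_{8R}(z_0)}|W|^2\le\tilde M^2$, and using the inclusion $B_1(0)\subset B_{6R/5}(z_0)$ valid for $R\ge 5$,
\[
\|\nabla u_R\|_{L^p(B_{6/5})}^p = R^{p-2}\int_{B_{6R/5}(z_0)}|\nabla u|^p\,dy \ge R^{p-2}\|\nabla u\|_{L^p(B_1)}^p \ge R^{p-2}.
\]

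Next, because $R^{(p-2)/p}\le 1$ for $p\le 2$, I would normalize: set $c_R:=\|\nabla u_R\|_{L^p(B_{6/5})}\ge R^{(p-2)/p}$ and $\tilde u_R:=u_R/c_R$. The $p$-homogeneity of the equation ensures that $\tilde u_R$ solves \eqref{PLapLOGT} with the same drift $W_R$, and by construction $\|\nabla\tilde u_R\|_{L^p(B_{6/5})}=1$. The induced $L^\infty$-bound reads $\|\tilde u_R\|_{L^\infty(B_8)}\le C_0 R^{(2-p)/p}$ in case (i), and, since $|Rz+z_0|\le 9R$ for $z\in B_8$ yields $\|u_R\|_{L^\infty(B_8)}\le(9R)^m$, one has $\|\tilde u_R\|_{L^\infty(B_8)}\le(9R)^m R^{(2-p)/p}$ in case (ii).

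Then I would apply Theorem~\ref{QUCP_q=2} to $\tilde u_R$ with $M=\tilde M$, $r=1/R$, and the corresponding effective $C_0$-bound from the preceding step. Multiplying the resulting lower bound for $\|\tilde u_R\|_{L^\infty(B_{1/R})}$ by $c_R$ and using the identity $\sup_{|w-z_0|<1}|u(w)|=c_R\,\|\tilde u_R\|_{L^\infty(B_{1/R})}$, one concludes by taking the infimum over $|z_0|=R$ and collecting the $R$-dependencies. In case (i), the effective $C_0$ grows like $R^{(2-p)/p}$, and combining with the polynomial factors from Theorem~\ref{QUCP_q=2} and the $c_R$-multiplier yields a lower bound of the form $R^{-C}$. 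In case (ii), the effective $C_0$ is of order $R^m$, so that $\log C_0=O(\log R)$ and the factor $r^{C_3\log C_0}$ in Theorem~\ref{QUCP_q=2} produces the sub-exponential term $\exp(-C_2(\log R)^2)$.

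The hard part will be reconciling the normalization constant $c_R$ (possibly as small as $R^{(p-2)/p}$) with the enlarged effective $C_0$ entering Theorem~\ref{QUCP_q=2}. Because the gradient information is anchored at the origin while we zoom in near $z_0$ with $|z_0|=R$, the scaled gradient falls short of unity by the factor $R^{(p-2)/p}$, and threading this deficit through the exponents of Theorem~\ref{QUCP_q=2} in a way that preserves the stated asymptotic form is the core bookkeeping step; it is precisely this mechanism that forces the critical case $q=2$ to yield a weaker quantitative estimate than the supercritical case $q>\max\{p,2\}$ treated in Section~\ref{sec:q>p 2}.
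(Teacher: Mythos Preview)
Your scaling approach coincides with the paper's, and your recognition that a normalization by $c_R$ is required when $p<2$ is a point the paper's own proof glosses over entirely: for part~(i) the paper simply defers to Kenig--Wang, which treats only $p=2$ (where $R^{p-2}=1$ and the hypothesis $\|\nabla u_R\|_{L^p(B_{6/5})}\ge1$ of Theorem~\ref{QUCP_q=2} holds without normalization), and for part~(ii) it plugs directly into \eqref{bounDq2} without verifying that gradient hypothesis. Your part~(ii) goes through as you describe, since the claimed bound is already $e^{-C(\log R)^2}$ and absorbs the normalization cost.

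There is, however, a gap in your part~(i) when $1<p<2$. You assert that with the effective bound $\tilde C_0\le C_0R^{(2-p)/p}$ the output of Theorem~\ref{QUCP_q=2}, combined with the $c_R$-multiplier, still yields $R^{-C}$. But in \eqref{bounDq2} the factor $r^{C_3\log\tilde C_0}$, with $r=1/R$ and $\log\tilde C_0$ of order $\tfrac{2-p}{p}\log R$, contributes
\[
r^{C_3\log\tilde C_0} \ = \ R^{-C_3\frac{2-p}{p}\log R} \ = \ \exp\Big(-C_3\,\tfrac{2-p}{p}\,(\log R)^2\Big),
\]
which decays faster than any power of $R$; neither the $c_R$-multiplier (at worst a fixed power $R^{(p-2)/p}$) nor the remaining polynomial factors can compensate. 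Thus for $p<2$ your argument, as written, delivers only $e^{-C(\log R)^2}$ in case~(i)---the same shape as case~(ii)---not the stated $R^{-C}$. When $p=2$ your lower bound already reads $c_R\ge1$, the normalization is vacuous, and applying Theorem~\ref{QUCP_q=2} directly to $u_R$ with the original $C_0$ gives the claimed $R^{-C}$.
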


\begin{proof}
(i). Let $\abs{z_0}=R, R\gg1.$ Define $v(z) = u(Rz+z_0)$. Then $v$ solves
	\[
	\dive(\abs{\nabla v}^{p-2}\nabla v) + W_R \cdot(\abs{\nabla v}^{p-2} \nabla v) = 0  \  \ \text{in} \ B_8
	\]
	with $W_R(z) = RW(Rz+z_0).$ Since $\|W\|_{L^2(\R^2)} \leq \tilde{M}$, we also have 
	$\|W_R\|_{L^2(B_8)} \leq \tilde{M}$. The final proof is similar to~\cite[Theorem 1.1, Part (ii)]{Kenig:Wang:2015}.
	
	(ii). Under the assumption on $u_R$, we have,
	\[
	\|u_R\|_{L^{\infty}(B_8)} \leq 9^mR^m,
	\]
	i.e., $C_0=9^mR^m$. Finally, replacing $r=\frac{1}{R}, C_0=9^mR^m$ in \eqref{bounDq2}, we obtain
	
	\begin{align*}
	\|u\|_{L^{\infty}(B_{1}(z_0))} 
	& = \|u_R\|_{L^{\infty}(B_r(0))} \\
	& \geq C_1(9^mR^m)^{-C_2}R^{-C_3\log (9^mR^m)} R^{-C_4\tilde{M}^2} \\
	& \geq \tilde{C}_1 e^{(-\tilde{C}_2(\log R)^2)}, \ \text{for}\ R\gg 1
	\end{align*}
	where $\tilde{C}_1$ depends on $m,p$ and $\tilde{C}_2$ depends on $m,p, \tilde{M}$.
\end{proof}

\section{SUCP for the weighted $p$-Laplace equation}\label{sec:weighted p laplace}

In this section, we consider the weighted $p$-Laplace equation, $p\in(1,\infty)$, 
\begin{equation}\label{weighted_pLalLipsc}
\dive(\sigma \abs{\nabla u}^{p-2} \nabla u) = 0 \ \text{ in } \ \Omega,
\end{equation}
where $\sigma\in W_{loc}^{1,\infty}(\Omega)$ is a locally positive locally Lipschitz continuous function, i.e., $\sigma$ is locally Lipschitz and for each $K\subset\subset \Omega$,  there exists a positive constant $c_K$ such that $\sigma>c_K$ in $K$. 

The main idea of the proof relies on localising the problem on a small region and then one uses the scaling argument as discussed in the previous sections.
We show that the estimate of the maximal vanishing order of the solution $v$ to the equation
\begin{equation}\label{eq1}
\dive(A \abs{\nabla v}^{p-2} \nabla v) = 0 \ \text{ in } \ B_8,
\end{equation}
with $A$ being a positive Lipschitz coefficient, would be enough to establish Theorem \ref{thm:SUCP weighted p-la}. Similar to the previous case, we first reduce the problem into certain quasilinear Beltrami equation. Then the derivation of the estimate of the maximal vanishing order for $v$ follows from the explicit solution of the Beltrami type equation, the appropriate use of Caccioppoli's inequality for the $p$-Laplace equation and the Hadamard's three circles theorem.

\subsection{Auxiliary result}
The following Caccioppoli's inequality will be useful in our later proofs.
\begin{lemma}\label{cacciWE2}
	Let $v$ be a weak solution of \eqref{eq1}. Then the following Caccioppoli's inequality holds true:
	\[
	\int_{B_r}\abs{\nabla v}^p dx \leq \frac{C\|A\|_{L^p(B_8)}^{p}}{(\rho-r)^p} \|v\|_{L^{\infty}(B_{\rho})}^{p},
	\]
	for $0<r<\rho<8$, where $C$ depends only on $p$.
\end{lemma}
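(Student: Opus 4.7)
The plan is to mirror the proof of Lemma~\ref{cacciWE} for the drift case, with the simplification that no drift term is present but the weight $A$ now sits inside the divergence. I choose a cutoff $\eta\in C_0^\infty(B_8)$ with $0\le\eta\le 1$, $\eta\equiv 1$ on $B_r$, $\supp\eta\subset \overline{B_\rho}$, and $|\nabla\eta|\leq 4/(\rho-r)$. Since $v\in W^{1,p}_{loc}(B_8)$, the product $\phi=v\eta^p$ lies in $W^{1,p}_0(B_8)$ and is thus admissible in the weak formulation of \eqref{eq1}.

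Testing against $\phi$ and expanding $\nabla(v\eta^p)=\eta^p\nabla v+pv\eta^{p-1}\nabla\eta$ yields
\[
\int A|\nabla v|^p\eta^p\,dx=-p\int A|\nabla v|^{p-2}(\nabla v\cdot\nabla\eta)\,v\eta^{p-1}\,dx.
\]
I would then apply H\"older's inequality with exponents $p/(p-1)$ and $p$, splitting the weight as $A=A^{(p-1)/p}\cdot A^{1/p}$, and absorb the resulting weighted gradient integral into the left-hand side to obtain
\[
\int A|\nabla v|^p\eta^p\,dx\leq p^p\int A|v|^p|\nabla\eta|^p\,dx.
\]

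To pass from this weighted estimate to the one in the statement, I invoke the hypotheses on $A$: local Lipschitz regularity implies that $A$ is bounded on $\overline{B_\rho}$, while local positivity supplies a constant $c_{B_\rho}>0$ with $A\geq c_{B_\rho}$ there. These allow me to strip $A$ from the left-hand side at the cost of a factor $c_{B_\rho}^{-1}$, and to estimate the right-hand side by $\|A\|_{L^\infty(B_\rho)}\|v\|_{L^\infty(B_\rho)}^p\|\nabla\eta\|_{L^p(B_\rho)}^p$, where $\|\nabla\eta\|_{L^p(B_\rho)}^p\leq C|B_\rho|/(\rho-r)^p$. A final application of H\"older on the fixed-measure ball $B_\rho\subset B_8$ converts the $L^\infty$ (or $L^1$) factor involving $A$ into the $\|A\|_{L^p(B_8)}^p$ appearing in the statement, yielding the desired bound.

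The main obstacle is purely cosmetic: the natural quantity produced by the Caccioppoli computation is a supremum or $L^1$ norm of $A$ on $B_\rho$, and a small amount of care with H\"older's inequality on a ball of fixed measure is needed to recast this as $\|A\|_{L^p(B_8)}^p$. The inequality that drives the argument (integration by parts against $v\eta^p$ followed by H\"older and absorption) is entirely standard.
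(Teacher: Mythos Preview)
Your strategy is exactly the paper's: test with $v\eta^p$, expand, apply H\"older, absorb. The only difference is how you handle the weight in the H\"older step, and it creates a small wrinkle in your last line. By splitting $A=A^{(p-1)/p}A^{1/p}$ you land on the weighted inequality $\int A|\nabla v|^p\eta^p\le p^p\int A|v|^p|\nabla\eta|^p$, which carries only one power of $A$ on the right. Your claimed conversion of this to $\|A\|_{L^p(B_8)}^p$ ``by H\"older'' does not work as stated: passing through $\|A\|_{L^\infty}$ gives an inequality in the wrong direction, and passing through $\|A\|_{L^1}$ and H\"older yields only $\|A\|_{L^p}$ to the \emph{first} power. (The latter can be repaired by invoking the lower bound once more, since $A\ge c_{B_\rho}$ forces $\|A\|_{L^p}\ge c_{B_\rho}|B_8|^{1/p}$ and hence $\|A\|_{L^p}\le C(c_{B_\rho})\|A\|_{L^p}^p$; but that is not H\"older and further loads the constant with $c_{B_\rho}$.)

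The paper avoids this detour by \emph{not} splitting $A$: it groups the full factor $A$ with $v\nabla\zeta$, so that H\"older pairs $|\zeta\nabla v|^{p-1}$ against $|Av\nabla\zeta|$ and directly produces $\big(\int|A|^p|v|^p|\nabla\zeta|^p\big)^{1/p}$. After absorption this reads $\int\zeta^p|\nabla v|^p\le p^p\int|A|^p|v|^p|\nabla\zeta|^p$, and the factor $\|A\|_{L^p(B_8)}^p$ falls out with no further manipulation. (The paper's first displayed identity silently drops the weight $A$ from the left-hand side; the positivity $A\ge c_0$ is still needed in the absorption, exactly as you observe, so in truth $C=C(p,c_0)$ in both arguments.)
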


\begin{proof}
	Since $v$ is a weak solution of the weighted $p$-Laplace equation \eqref{eq1},
	\begin{equation}\label{cacci}
	\int_{B_8} A\abs{\nabla v}^{p-2}\nabla v\cdot\nabla\eta dx =0
	\end{equation}
	for all $\eta\in W_{0}^{1,p}(B_8)$. 
	
	Let $\zeta\in C_0^\infty(B_8)$ be a smooth cut-off function, i.e., $0\leq \zeta\leq 1$ in $B_\rho$, $\zeta\equiv 0$ in $B_8\backslash \overline{B_\rho}$, $\zeta\equiv 1$ in $B_r$ and $|\nabla \zeta|\leq \frac{4}{\rho-r}$ in $B_\rho\backslash \overline{B_r}$. Set
	\begin{align*}
	\eta &= \zeta^p v, \\
	\nabla\eta & = \zeta^p\nabla v + p\zeta^{p-1} v \nabla \zeta.
	\end{align*}
	Then H\"older's inequality implies
	\[
	\begin{split}
	\int_{B_8} \zeta^p \abs{\nabla v}^p dx 
	& = -p \int_{B_8} A\zeta^{p-1}v \abs{\nabla v}^{p-2}\nabla v \cdot \nabla\zeta dx \\
	& \leq p \int_{B_8} \abs{\zeta\nabla v}^{p-1} \abs{Av\nabla\zeta} dx \\
	& \leq p \left\{\int_{B_8}\zeta^p \abs{\nabla v}^p dx\right\}^{1-1/p} 
	\left\{\int_{B_8}\abs{A}^p\abs{v}^p \abs{\nabla\zeta}^p dx\right\}^{1/p},
	\end{split}
	\]
	i.e.,
	\begin{equation}\label{cacci1}
	\int_{B_8} \zeta^p \abs{\nabla v}^p dx \leq p^p \int_{B_8}\abs{A}^p\abs{v}^p \abs{\nabla\zeta}^p dx.
	\end{equation}
	In particular,
	\[
	\begin{split}
	\int_{B_r} \abs{\nabla v}^p dx 
	& \leq \frac{p^p}{(\rho-r)^p}\int_{B_{\rho}}\abs{A}^p\abs{v}^p dx \\
	& \leq \frac{p^p}{(\rho-r)^p} \|v\|_{L^{\infty}(B_\rho)}^{p} \int_{B_\rho} \abs{A}^p dx \\
	& \leq \frac{C \|A\|_{L^p(B_8)}^{p}}{(\rho-r)^p} \|v\|_{L^{\infty}(B_\rho)}^{p},
	\end{split}
	\]
	where $0<r<\rho<8$ and the constant $C>0$ depends only on $p$.
\end{proof}

We follow the computation from~\cite[Appendix A3]{Guo:Kar:Salo:2015} to transform the weighted $p$-Laplace equation \eqref{eq1} to a certain Beltrami type equation. Let $G = A v_x - iA v_y$, where $v$ solves \eqref{eq1} and define $F=\abs{G}^aG$ with $a=\frac{p-2}{2}$. Then $F$ satisfies
\begin{align}\label{eq:for Fa}
\frac{\partial F}{\partial \bar{z}}=q_1\frac{\partial F}{\partial z}+q_2\overline{\frac{\partial F}{\partial z}}+q_3 F, \  \text{in}\ B_8,
\end{align}
where
\[
q_1 = -\frac{1}{2}\left(\frac{p-2}{p+2} + \frac{p-2}{3p-2} \right)\frac{\overline{F}}{F},
\]
\[
q_2 = -\frac{1}{2}\left(\frac{p-2}{3p-2} - \frac{p-2}{p+2} \right)\frac{F}{\overline{F}}
\]
and
\[
\begin{split}
q_3
=&  A \frac{p}{p+2}\left[\frac{\overline{F}}{F} \frac{\partial}{\partial z}\left(\frac{1}{A}\right) -  \frac{\partial}{\partial \bar{z}}\left(\frac{1}{A}\right)\right] \\
&- A^{p-2} \frac{p}{3p-2}\left[ \frac{\overline{F}}{F} \frac{\partial}{\partial z}\left(\frac{1}{A^{p-2}}\right) + \frac{\partial}{\partial \bar{z}}\left(\frac{1}{A^{p-2}}\right)\right].
\end{split}
\]
Since $A\geq c_0$ and $A$ is Lipschitz in $B_8$, it is easy to check that $k=\|q_1\|_{L^{\infty}(B_8)}+\|q_2\|_{L^{\infty}(B_8)}<1$ and $\|q_3\|_{L^{\infty}(B_8)}\leq M$ (we may assume that $M>1$). For $\delta\in (2,1+\frac{1}{k})$, we know that $\|q_3\|_{L^{\delta}(B_8)}\leq CM$, where $k$ is explicitly calculated in Remark \ref{rmk:on the k} and the constant $C>0$ depends eventually only on $p$. Therefore, by the proof of Theorem \ref{ThpLLL}, the solution of the Beltrami equation \eqref{eq:for Fa} has the following representation:
\[
F(z) = (h\circ\phi)(z)e^{\omega(z)} \ \ \text{in}\ B_8
\]
where $\phi:B_8\to \phi(B_8)$ is $K$-quasiconformal (with $K$ depending only on $p$), $h:\phi(B_8)\to \R^2$ is holomorphic, and $\omega(z)=(Tg)(z)$ is the Cauchy transform of $g$ for some $g\in L^{\delta}(B_8)$ with $2<\delta<(1+\frac{1}{k})$.

\subsection{Proofs of the main results}\label{subsec:Assuming Lipschitz regularity}

We are now able to prove the following estimate of the maximal vanishing order for the solution $v$ of~\eqref{eq1}.

\begin{theorem}\label{thm:maximal vanishing order}
	Let $v\in W_{loc}^{1,p}(B_8)$ be a weak solution of the weighted $p$-Laplace equation \eqref{eq1} with $A$ satisfying that $A\geq c_0$ and that $A$ is $M$-Lipschitz in $B_8$. If $\|v\|_{L^{\infty}(B_8)}\leq C_0$ and $\sup_{z\in B_1}\abs{\nabla v(z)}\geq 1$, then
	\begin{equation}\label{lower_bound_infty}
	\|v\|_{L^{\infty}(B_r)} \geq C_{0}^{C_1} r^{C_2\log C_0} r^{C_3M}
	\end{equation}
	where $C_1, C_2, C_3$ are positive constants depending only on $p$, $c_0$, and $M$.
\end{theorem}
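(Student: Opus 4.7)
The proof proposal follows the template of Theorem \ref{CaseiThe}, with the drift term replaced by the multiplicative weight $A$. The starting point is the Beltrami representation $F(z) = h(\phi(z))e^{\omega(z)}$ derived above for the function $F = |G|^{(p-2)/2}G$ with $G = Av_x - iAv_y$. Here $\phi\colon B_8 \to \phi(B_8)$ is $K$-quasiconformal with $K$ depending only on $p$, $h$ is holomorphic, and $\omega = Tg$ is the Cauchy transform of some $g\in L^\delta(B_8)$ solving $(I-\hat{o}_1 S)g = \hat{o}_2$ for an appropriate $\delta \in (2, 1+1/k)$.

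The plan is to execute four steps. First, I bound $\omega$: the formula for $q_3$, together with $A \geq c_0$ and the $M$-Lipschitz property of $A$, gives $\|q_3\|_{L^\infty(B_8)} \leq CM$ with $C$ depending only on $p$ and $c_0$. The boundedness of $I - \hat{o}_1 S$ on $L^\delta$ and of the Cauchy transform $T\colon L^\delta(B_8) \to L^\infty(B_8)$ for $\delta > 2$ (see \cite[Section 4.3.2]{Astala:Iwaniec:Martin} and \cite[Theorem 1]{Astala:Iwaniec:Saksman:2001}) yield $\|\omega\|_{L^\infty(B_8)} \leq CM$. Second, using $|F|^2 = A^p|\nabla v|^p$, the bound $A \leq C(p,c_0,M)$ (coming from Lipschitz continuity on $B_8$ together with $A \geq c_0$ at some point), and Caccioppoli's inequality from Lemma \ref{cacciWE2}, I obtain
\begin{align*}
\|h\circ\phi\|_{L^2(B_{r/2})}^{2} &\leq Ce^{2CM}\int_{B_{r/2}}|\nabla v|^p \leq \frac{C M^p e^{2CM}}{r^p}\|v\|_{L^\infty(B_r)}^p, \\
\|h\circ\phi\|_{L^2(B_7)}^{2} &\leq C M^p e^{2CM}\|v\|_{L^\infty(B_8)}^p.
\end{align*}

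Third, I convert the assumption $\sup_{B_1}|\nabla v| \geq 1$ into a lower bound for $h\circ\phi$: since $|F| = A^{p/2}|\nabla v|^{p/2} \geq c_0^{p/2}|\nabla v|^{p/2}$, and $|h\circ\phi| = |F|e^{-\mathrm{Re}\,\omega} \geq |F|e^{-CM}$, we get $\|h\circ\phi\|_{L^\infty(B_1)} \geq c_0^{p/2}e^{-CM}$. Fourth, the composition $h\circ\phi$ is $K$-quasiregular with $K$ depending only on $p$, so Proposition \ref{prop:key prop} applies:
\begin{align*}
c_0^{p/2}e^{-CM} \leq \|h\circ\phi\|_{L^\infty(B_1)} \leq C\bigl(r^{-1}\|h\circ\phi\|_{L^2(B_{r/2})}\bigr)^{\theta}\|h\circ\phi\|_{L^2(B_7)}^{1-\theta},
\end{align*}
with $\theta = E/\log(E'/r) \in (0,1)$, $E,E'$ depending only on $p$. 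Substituting the $L^2$ bounds from the second step and solving for $\|v\|_{L^\infty(B_r)}$ yields, after absorbing all $M$-dependent factors into the exponential and using the explicit form of $\theta$, an inequality of the shape
\begin{align*}
\|v\|_{L^\infty(B_r)} \geq C_0^{C_1}\, r^{C_2 \log C_0}\, r^{C_3 M},
\end{align*}
exactly as in the derivation of \eqref{esrir}.

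No genuinely new ideas are needed beyond Theorem \ref{CaseiThe}; the main technical burden is the constant bookkeeping through Caccioppoli and the three-circle inequality. The one place where the weighted setting genuinely enters is the lower bound $|F| \geq c_0^{p/2}|\nabla v|^{p/2}$, which requires the local positivity $A \geq c_0$; this is precisely why the constants in \eqref{lower_bound_infty} must depend on $c_0$ (as well as on $p$ and $M$). The Lipschitz regularity of $A$ is used to guarantee $q_3 \in L^\infty(B_8)$, making the $L^\delta$-theory of Beltrami operators applicable without appealing to an $L^q$-type hypothesis on the coefficient.
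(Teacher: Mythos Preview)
Your proposal is correct and follows essentially the same approach as the paper's own proof, which explicitly says ``the proof is similar to Theorem~\ref{CaseiThe}'' and then records the three ingredients \eqref{pLaplace_LipscTH}, \eqref{Quasi_Cacci}, \eqref{Hada_pLapl} before invoking \eqref{EsTTima}. If anything, you are slightly more careful than the paper in tracking where the positivity $A\geq c_0$ enters: the paper simply points to \eqref{EsTTima}, whereas you explicitly note that the lower bound on $\|h\circ\phi\|_{L^\infty(B_1)}$ picks up the factor $c_0^{p/2}$ because here $|F| = A^{p/2}|\nabla v|^{p/2}$ rather than $|\nabla v|^{p/2}$.
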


\begin{proof}
	The proof is similar to Theorem \ref{CaseiThe}. For the convenience of the readers, we would like to add a few lines here. In the view of the mapping properties of the Cauchy transform and the operator
	$I-\hat{o}_1S$, we see that
	\begin{equation}\label{pLaplace_LipscTH}   
	\abs{\omega(z)} \leq CM, \ \text{for}\ z\in B_8.
	\end{equation}
	Also, using the Caccioppoli's inequality, see Lemma \ref{cacciWE2}, we have the following estimate for the $K$-quasiregular map $h\circ\phi$,
	\begin{equation}\label{Quasi_Cacci}
	\|h\circ\phi\|_{L^2(B_{r/2})}^{2} \leq e^{2CM} \frac{C^2M^p}{(r/2)^p} \|v\|_{L^{\infty}(B_r)}^{p},
	\end{equation}
	where $C>0$ depends only on $p$. We also remark from Proposition \ref{prop:key prop} that
	\begin{align}\label{Hada_pLapl}
	\|h\circ\phi\|_{L^\infty(B_1)}^{2}\leq C\big(r^{-2}\|h\circ\phi\|_{L^2(B_{r/2})}^2\big)^{\theta}\|h\circ\phi\|_{L^2(B_7)}^{2(1-\theta)},
	\end{align} 
	with an appropriate $\theta$ between $0$ and $1$.
	
	Combining \eqref{pLaplace_LipscTH}, \eqref{Quasi_Cacci} and \eqref{Hada_pLapl}, we can deduce the estimate \eqref{EsTTima}. Finally, our result follows based on the estimate \eqref{EsTTima}.
\end{proof}

\begin{proof}[Proof of Theorem~\ref{thm:SUCP weighted p-la}]
	With the help of Theorem~\ref{thm:maximal vanishing order}, the proof is the same as the proof of Theorem~\ref{thm:SUCP}.
\end{proof}

\begin{remark}\label{rmk:on QUCP for p-Laplace type equations}
	With Theorem~\ref{thm:maximal vanishing order} at hand, we could also obtain a version of Theorem~\ref{thm:main thm} for the weighted $p$-Laplace equation~\eqref{eq:weighted p laplace} under certain global a priori boundedness assumption on the weak solution. In particular, for the standard $p$-Laplace equation, this will give certain lower bound on the decay rate of $p$-harmonic functions, which might be of independent interest.
\end{remark}

\subsection{Weakening the regularity on $\sigma$ when $1<p\leq 2$}\label{subsec:weaken the regularity}

In this section, we only consider the case $p\in (1,2]$. The aim is to prove strong unique continuation principle (SUCP) for the weighted $p$-Laplace equation \eqref{eq1} under weaker assumptions on the weight $\sigma$. To be more precise, throughout this section, we assume that  
\begin{enumerate}
	\item $\sigma$ is locally positive, i.e., for each compact set $K$ in $\Omega$, there exists a positive constant $c_K>0$ such that $\sigma>c_K$ in $K$;
	
	\item $\sigma\in L^\infty_{loc}(\Omega)$ if $p=2$ and $\sigma\in C^{0,\alpha}_{loc}(\Omega), 0<\alpha<1$ if $p\in (1,2)$.
\end{enumerate}

\begin{theorem}\label{thm:SUCP 1<p<2}
	 Let $v\in W_{loc}^{1,p}(\Omega)$ be a weak solution of \eqref{weighted_pLalLipsc} with $1<p\leq 2$. If for some $z_0\in \Omega$ and for all $N\in \mathbb{N}$, there exist $C_N>0$ and $r_N>0$ such that
	 \[
	 \abs{v(z)-v(z_0)} \leq C_N\abs{z-z_0}^N, \ \forall \ \abs{z-z_0}<r_N,
	 \]
	 then
	 \[
	 v(z) \equiv v(z_0).
	 \]
\end{theorem}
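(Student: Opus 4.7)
The plan is to follow the template of Theorem~\ref{thm:SUCP weighted p-la}: localize to $B_8\subset\Omega$, reduce (by translation and scaling) to $z_0=0$, and establish that a weak solution $v$ vanishing at $0$ to infinite order must be identically $v(0)$ on $B_1$; the global statement then follows by a standard chain-of-balls argument. The obstruction compared with Theorem~\ref{thm:SUCP weighted p-la} is that the coefficient $q_3$ in the Beltrami equation~\eqref{eq:for Fa} involves the classical derivatives $\partial_z(1/A)$ and $\partial_{\bar z}(1/A^{p-2})$, which cease to make pointwise sense when $\sigma$ is merely $L^\infty_{loc}$ (the case $p=2$) or $C^{0,\alpha}_{loc}$ (the case $1<p<2$).

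For $p=2$, the plan is to bypass~\eqref{eq:for Fa} entirely via the classical stream-function construction. Since $\dive(\sigma\nabla v)=0$ distributionally and $B_8$ is simply connected, the field $(\sigma\partial_x v,\sigma\partial_y v)$ is divergence-free in the weak sense and therefore admits (by Poincar\'e's lemma) a ``conjugate'' $\tilde v\in W^{1,2}_{loc}(B_8)$ with $\partial_y\tilde v=\sigma\partial_x v$ and $\partial_x\tilde v=-\sigma\partial_y v$. Setting $f=v+i\tilde v$, a direct computation yields $f_z=\tfrac{1+\sigma}{2}(v_x-iv_y)$ and $f_{\bar z}=\tfrac{1-\sigma}{2}(v_x+iv_y)$, so that $|f_{\bar z}|\leq k|f_z|$ almost everywhere with $k<1$ depending only on the two-sided bounds on $\sigma$. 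This derivation uses only $\sigma\in L^\infty_{loc}$ and $\sigma\geq c_0$. By the Stoilow factorization (\cite[Corollary 5.5.4]{Astala:Iwaniec:Martin}), $f=h\circ\chi$ with $\chi\colon B_8\to\chi(B_8)$ $K$-quasiconformal and $h$ holomorphic. Since $\chi^{-1}$ is locally H\"older continuous, the infinite-order vanishing of $v-v(0)=\real(f-f(0))$ at $0$ transfers (with weaker but still arbitrary rates) to infinite-order vanishing of $\real h-\real h(\chi(0))$ at $\chi(0)$; expanding $h$ as a power series and separating real and imaginary parts forces $h$ to be constant, whence $v\equiv v(0)$ on $B_1$.

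For $1<p<2$, the plan is an approximation scheme. Standard $C^{1,\beta}$ regularity for the weighted $p$-Laplace equation with H\"older coefficients (e.g.\ Tolksdorf/Lieberman) gives $v\in C^{1,\beta}_{loc}(\Omega)$ for some $\beta=\beta(p,\alpha)\in(0,1)$. Let $\sigma_\epsilon=\sigma*\rho_\epsilon$ and let $v_\epsilon\in W^{1,p}(B_8)$ be the unique weak solution of $\dive(\sigma_\epsilon|\nabla v_\epsilon|^{p-2}\nabla v_\epsilon)=0$ with $v_\epsilon=v$ on $\partial B_8$. Uniform $C^{1,\beta}$ estimates (uniform because mollification does not increase the $C^{0,\alpha}$ semi-norm) together with uniqueness of the Dirichlet problem give $v_\epsilon\to v$ in $C^1_{loc}(B_8)$. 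Each $F_\epsilon=|G_\epsilon|^{(p-2)/2}G_\epsilon$, where $G_\epsilon=\sigma_\epsilon((v_\epsilon)_x-i(v_\epsilon)_y)$, admits the representation $F_\epsilon=h_\epsilon(\phi_\epsilon)e^{\omega_\epsilon}$ from Section~\ref{sec:weighted p laplace}. Crucially, the Beltrami coefficients $q_1^{(\epsilon)},q_2^{(\epsilon)}$ depend only on $p$ and on the argument of $F_\epsilon$ (not on $\sigma_\epsilon$), so the $\phi_\epsilon$ are uniformly $K$-quasiconformal with $K=K(p)$ and a subsequence converges locally uniformly to a $K$-quasiconformal $\phi$; extracting further subsequences for $h_\epsilon$ and $\omega_\epsilon$ and passing to the limit would yield $F=h(\phi)e^\omega$, after which the argument of Section~\ref{subsec:Assuming Lipschitz regularity} applied to the limit gives $v\equiv v(0)$ in $B_1$.

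The main difficulty lies in the $1<p<2$ case, specifically in controlling $\omega_\epsilon$: the $L^\infty$-bound on $\omega_\epsilon$ coming from the mapping properties of the Cauchy transform and of $(I-\hat o_1 S)^{-1}$ depends on $\|q_3^{(\epsilon)}\|_{L^\delta}$, which in turn depends on the Lipschitz norm of $\sigma_\epsilon$ and hence blows up as $\epsilon\to 0$. The key additional ingredient needed is to rewrite the term $q_3^{(\epsilon)}F_\epsilon$ via integration by parts so that only the $C^{0,\alpha}$-norm of $\sigma_\epsilon$ (together with the uniform $C^{1,\beta}$-norm of $F_\epsilon$) enters the estimates; this is precisely where the $C^{0,\alpha}$-assumption on $\sigma$ (rather than merely $L^\infty$) is indispensable for $1<p<2$.
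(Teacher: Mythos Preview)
Your treatment of $p=2$ is essentially the paper's: construct the stream function/harmonic conjugate $\tilde v$, form $f=v+i\tilde v$, observe $|f_{\bar z}|\le k|f_z|$ with $k=k(c_0,\|\sigma\|_\infty)<1$, and apply Stoilow. The paper's endgame is a Hadamard three-circle argument rather than your power-series argument, but both are valid; your observation that infinite-order vanishing of $\real h$ at a point forces a holomorphic $h$ to be constant is correct.

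For $1<p<2$, however, you take a detour that the paper avoids entirely, and you do not close it. The difficulty you identify is real: in the representation $F_\epsilon=h_\epsilon(\phi_\epsilon)e^{\omega_\epsilon}$ coming from \eqref{eq:for Fa}, the bound $\|\omega_\epsilon\|_\infty\le C\|q_3^{(\epsilon)}\|_{L^\delta}$ blows up with the Lipschitz norm of $\sigma_\epsilon$, and your proposed ``integration by parts'' fix is not carried out (and it is not clear how it would produce a uniform $L^\delta$ bound on $q_3^{(\epsilon)}$ from a mere $C^{0,\alpha}$ control of $\sigma$). So as written, the $1<p<2$ case has a genuine gap.

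The paper's remedy is much simpler, and in fact you already have it in hand: use the \emph{same} conjugate-function construction for all $1<p\le 2$, not just $p=2$. Since $\dive(A|\nabla v|^{p-2}\nabla v)=0$, the field $A|\nabla v|^{p-2}\nabla v$ admits a stream function $w$ with $w_x=-A|\nabla v|^{p-2}v_y$, $w_y=A|\nabla v|^{p-2}v_x$ (this is the $p$-harmonic conjugate; see Lemma~\ref{conjugate}). Setting $F=v+iw$, one computes directly (Lemma~\ref{nonBel}) that
\[
\frac{\partial F}{\partial\bar z}=\mu\,\overline{\frac{\partial F}{\partial z}},\qquad \mu=\frac{1-A|\nabla v|^{p-2}}{1+A|\nabla v|^{p-2}}.
\]
There is \emph{no} zeroth-order term $q_3F$, hence no Cauchy transform, no $\omega$, and nothing to control in a limit. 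The $C^{0,\alpha}$ assumption on $\sigma$ is used exactly once, to invoke Lieberman's $C^{1,\beta}$ interior regularity for $v$; this gives a uniform bound on $|\nabla v|$ on $\overline{B_8}$, which (together with $A\ge c_0$) bounds $|\mu|$ away from $1$ (Theorem~\ref{SolBel1}). Then $F=h\circ\phi$ by Stoilow, and the paper finishes with Hadamard's three-circle theorem for $F$ plus a Caccioppoli-based estimate for $\|w\|_{L^{p'}(B_{r/2})}$ in terms of $\|v\|_{L^\infty(B_r)}$. The moral: rather than approximating $\sigma$ and fighting the blow-up of $q_3^{(\epsilon)}$, work with $F=v+iw$ instead of $F=|G|^{(p-2)/2}G$; the former yields a homogeneous Beltrami equation whose coefficient requires no derivatives of $\sigma$ at all.
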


As in the previous section, the proof is based on the localization argument and we will do much of the analysis to the equation~\eqref{eq1}, where $A$ satisfies 
\begin{enumerate}
	\item $A>c_0$ in $B_8$;
	
	\item $A\in L^\infty(B_8)$ if $p=2$ and $A\in C^{0,\alpha}(\overline{B_8})$ if $p\in (1,2)$.
\end{enumerate}

We start with the following lemma concerning the existence of weighted $q$-harmonic conjugate.

\begin{lemma}\label{conjugate}
	Let $v\in W^{1,p}(B_8)$ be a nontrivial weak solution of~\eqref{eq1}. Then there exists a weighted $q$-harmonic function $w\in W^{1,q}(B_8)$ satisfying $\dive({A}^{1-q}|\nabla w|^{q-2}\nabla w) =0$ such that
	\[
	w_x = -A|\nabla v|^{p-2}v_y \ \text{and}\ w_y = A|\nabla v|^{p-2}v_x,
	\] 
	where $p$ and $q$ are the conjugate exponents.
\end{lemma}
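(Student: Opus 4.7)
My plan is to interpret the equation $\dive(A|\nabla v|^{p-2}\nabla v)=0$ as saying that the planar vector field $F:=A|\nabla v|^{p-2}\nabla v$ is divergence-free, and then obtain $w$ as a ``stream function'' for $F$. Concretely, writing $F=(F_1,F_2)$ with $F_1=A|\nabla v|^{p-2}v_x$ and $F_2=A|\nabla v|^{p-2}v_y$, the relations in the lemma are exactly $w_x=-F_2$ and $w_y=F_1$, i.e., $\nabla w=F^\perp$.

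First I would check that $F\in L^q(B_8;\mathbb R^2)$: this follows because $v\in W^{1,p}(B_8)$, $A$ is bounded, and $|\nabla v|^{p-2}\nabla v$ is the standard nonlinear vector (set equal to $0$ where $\nabla v=0$), whose modulus is $|\nabla v|^{p-1}\in L^{p/(p-1)}=L^q$. The PDE then reads $\dive F=0$ in $\mathcal D'(B_8)$, equivalently the $L^q$ 1-form $-F_2\,dx+F_1\,dy$ is distributionally closed. Since $B_8$ is simply connected, the $L^q$ Poincar\'e lemma yields $w\in W^{1,q}(B_8)$, unique up to an additive constant, with $\nabla w=F^\perp$; in practice one proves this by mollifying $F$, using the smooth stream-function formula via path integrals from a fixed base point, and passing to the limit using the uniform $W^{1,q}$ bound.

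For the dual equation, I would verify the pointwise algebraic identity $A^{1-q}|\nabla w|^{q-2}\nabla w=(-v_y,v_x)$ almost everywhere in $B_8$. At points where $\nabla v\ne0$, direct computation gives $|\nabla w|^2=w_x^2+w_y^2=A^2|\nabla v|^{2(p-1)}$, so $|\nabla w|=A|\nabla v|^{p-1}$; the conjugacy $(p-1)(q-1)=1$ implies $(p-1)(q-2)=2-p$, hence $|\nabla w|^{q-2}=A^{q-2}|\nabla v|^{2-p}$, and substituting yields
\[
A^{1-q}|\nabla w|^{q-2}\nabla w = A^{-1}|\nabla v|^{2-p}(-F_2,F_1) = (-v_y,v_x).
\]
Both sides vanish on $\{\nabla v=0\}$ (here we use $q\ge 2$, so no singularity arises on the $q$-side). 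Consequently $\dive\bigl(A^{1-q}|\nabla w|^{q-2}\nabla w\bigr)=\dive(-v_y,v_x)$, and the latter equals $0$ in $\mathcal D'(B_8)$ by a single integration by parts against $C_c^\infty$ test functions, which is legitimate since $v\in W^{1,p}$.

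The only genuinely delicate points are (i) invoking the Poincar\'e lemma at the $L^q$ level on the simply connected Lipschitz domain $B_8$, and (ii) handling the degenerate set $\{\nabla v=0\}$ when identifying $A^{1-q}|\nabla w|^{q-2}\nabla w$ with $(-v_y,v_x)$. The former is standard; the latter is painless because $p\in(1,2]$ forces $q\ge 2$, so the $q$-nonlinearity is continuous at $0$ and both sides of the identity vanish on the degenerate set by construction. I do not expect a deeper obstacle.
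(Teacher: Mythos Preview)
Your approach is essentially the same as the paper's: both read the equation as $\dive F=0$ for the vector field $F=A|\nabla v|^{p-2}\nabla v$, observe that the rotated field $(-F_2,F_1)$ is (distributionally) curl-free on the simply connected disc, and invoke the Poincar\'e lemma to produce $w\in W^{1,q}$ with $\nabla w=(-F_2,F_1)$. The paper phrases this as ``from the Cauchy--Riemann equation there exists $w$'' and stops there. Your write-up is in fact more complete than the paper's, since you also carry out the pointwise algebra showing $A^{1-q}|\nabla w|^{q-2}\nabla w=(-v_y,v_x)$ and hence that $w$ solves the dual weighted $q$-Laplace equation; the paper simply asserts ``from which the result follows'' without spelling this out. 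Your handling of the degenerate set and the final step $\dive(-v_y,v_x)=0$ via $\operatorname{curl}(\nabla v)=0$ in $\mathcal D'$ are both fine.
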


\begin{proof}
	For non constant $v$, the equation $\text{div}(A|\nabla v|^{p-2}\nabla v) =0$ can be written as
	\begin{equation}\label{complex}
	\frac{\doo}{\doo x}\left( A|\nabla v|^{p-2}v_x\right) + \frac{\doo}{\doo y}\left(A|\nabla v|^{p-2}v_y\right)= 0.
	\end{equation}
	Introduce $\phi$ such that $\phi = (\phi_1, \phi_2)$ where
	\[
	\phi_1 = - A|\nabla v|^{p-2}v_y \ \text{and}\ \phi_2=A|\nabla v|^{p-2}v_x.
	\]
	Then \eqref{complex} becomes
	\[
	\frac{\doo \phi_1}{\doo y} = \frac{\doo \phi_2}{\doo x}.
	\]
	Hence, from Cauchy-Riemann equation, there exists $w\in W_{loc}^{1,q}(\Omega)$ with $w(0)=0$ (unique upto constant) such that
	\[
	\frac{\doo w}{\doo x} = \phi_1 \ \text{and}\ \frac{\doo w}{\doo y} = \phi_2,
	\]
	from which the result follows.
\end{proof}

Note that if $p=2$, then both $v$ and $w$ lie in $W^{1,2}(B_8)$. If $p\in (1,2)$, then $v\in C^{1,\beta}(\overline{B_8})$ (cf.~\cite{Lieberman:1988}) and hence both $v$ and $w$ belong to $W^{1,2}(B_8)$. In particular, the mapping $F:=v+iw$ belongs to $W^{1,2}(B_8)$.

\begin{lemma}\label{nonBel}
	Let $v$ and $w$ be given as in Lemma \ref{conjugate}. Then we have the following two identities:
	\begin{enumerate}
		\item [(i)]
		\[
		\frac{\doo F}{\doo \ol{z}} = \frac{1- A|\nabla v|^{p-2}}{1 + A|\nabla v|^{p-2}} \ol{\frac{\doo F}{\doo z} },
		\]
		where $F$ is defined as $F := v + iw.$
		\item [(ii)]
		\[
		\frac{\doo F}{\doo \ol{z}} = \frac{1- A |f|^{p-2}}{1+A|f|^{p-2}}  \frac{\ol{f}}{f} \frac{\doo F}{\doo z},
		\]
		where $f$ is defined as $f :=v_x - iv_y \neq 0.$
	\end{enumerate}
\end{lemma}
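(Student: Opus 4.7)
\textbf{Proof proposal for Lemma~\ref{nonBel}.} The plan is to carry out a direct computation with the Wirtinger derivatives $\partial_z = \tfrac{1}{2}(\partial_x - i\partial_y)$ and $\partial_{\bar z} = \tfrac{1}{2}(\partial_x + i\partial_y)$, using only the pointwise Cauchy--Riemann-type identities from Lemma~\ref{conjugate}:
\begin{equation*}
w_x = -A|\nabla v|^{p-2} v_y, \qquad w_y = A|\nabla v|^{p-2} v_x.
\end{equation*}
Since we have already remarked that $F=v+iw \in W^{1,2}(B_8)$ under the regularity assumptions on $A$ (using Lieberman's $C^{1,\beta}$-regularity for $p\in(1,2)$ and $W^{1,2}$ straight away for $p=2$), these derivatives make sense almost everywhere and the identities in the statement are to be understood in this sense.

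First I would expand $\partial_{\bar z}F = \tfrac{1}{2}(F_x + i F_y)$. Separating real and imaginary parts and substituting the Cauchy--Riemann-type relations gives
\begin{equation*}
\partial_{\bar z} F = \tfrac{1}{2}\bigl[(v_x - w_y) + i(v_y + w_x)\bigr] = \tfrac{1}{2}\bigl(1 - A|\nabla v|^{p-2}\bigr)(v_x + i v_y).
\end{equation*}
An analogous calculation for $\partial_z F = \tfrac{1}{2}(F_x - i F_y)$ yields
\begin{equation*}
\partial_z F = \tfrac{1}{2}\bigl[(v_x + w_y) + i(w_x - v_y)\bigr] = \tfrac{1}{2}\bigl(1 + A|\nabla v|^{p-2}\bigr)(v_x - iv_y).
\end{equation*}
Recognising $f = v_x - iv_y$ and $\bar f = v_x + iv_y$, these two formulas become
\begin{equation*}
\partial_{\bar z} F = \tfrac{1}{2}\bigl(1 - A|\nabla v|^{p-2}\bigr)\bar f, \qquad \partial_z F = \tfrac{1}{2}\bigl(1 + A|\nabla v|^{p-2}\bigr) f.
\end{equation*}

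With these two pointwise formulas in hand, assertion (i) follows by taking the complex conjugate of $\partial_z F$ (which leaves the real factor $1+A|\nabla v|^{p-2}$ unchanged and turns $f$ into $\bar f$) and dividing, so that the common factor $\tfrac{1}{2}\bar f$ cancels. Assertion (ii) is then immediate from the elementary identity $|f|^2 = v_x^2 + v_y^2 = |\nabla v|^2$, which allows us to replace $A|\nabla v|^{p-2}$ by $A|f|^{p-2}$, together with the trivial rewriting $\bar f = (\bar f / f)\, f$, so that the ratio $\partial_{\bar z} F / \partial_z F$ produces the factor $(\bar f / f)$ explicitly. I do not foresee any genuine obstacle here: once the Wirtinger derivatives have been expanded and the conjugate equations substituted, the entire lemma reduces to algebraic manipulation. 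The only mild care is the division by $f$ and by $1+A|\nabla v|^{p-2}$, both of which are legitimate on the set $\{f\neq 0\}$ (which is assumed in the statement of (ii)) and the fact that $A>c_0>0$.
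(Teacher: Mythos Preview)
Your proof is correct and follows essentially the same approach as the paper: both compute $\partial_{\bar z}F$ and $\partial_z F$ directly via the Wirtinger operators, substitute the conjugate relations $w_x=-A|\nabla v|^{p-2}v_y$, $w_y=A|\nabla v|^{p-2}v_x$, and then divide. Your derivation of (ii) via $|f|=|\nabla v|$ and $\bar f=(\bar f/f)f$ is in fact cleaner than the paper's somewhat cryptic substitution step.
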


\begin{proof}
	Define $F := v + iw$. Calculating the complex derivatives of $F$, we obtain
	\[
	\begin{split}
	\frac{\doo F}{\doo \ol{z}}
	& = \frac{\doo v}{\doo \ol{z}} + i\frac{\doo w}{\doo \ol{z}} \\
	& = \frac{1}{2} (1-A\abs{\nabla v}^{p-2})(v_x+i v_y)
	\end{split}
	\]
	and 
	\[
	\begin{split}
	\frac{\doo F}{\doo z}
	& = \frac{\doo v}{\doo {z}} + i\frac{\doo w}{\doo {z}} \\
	& = \frac{1}{2} (1 + A\abs{\nabla v}^{p-2})(v_x - i v_y).
	\end{split}
	\]
	Combining the above two identities the proof of (i) follows immediately.
		
	In order to establish the second equation, simply notice that	$v_x-iv_y = -i A\abs{f}^{p-2}f$ and that $v_x+iv_y = i A\abs{f}^{p-2}f$. The claim follows by substituting these two identities in (i).
\end{proof}

Note that, $f = v_x - iv_y = 2v_z$ and $\abs{f}=2\abs{v_z}=(v_x^2+v_y^2)^{1/2} = \abs{\nabla v}.$ If we rewrite $F$ as $F= v+iw=Re(F)+iIm(F)$ and $\abs{f} = 2\abs{(Re(F))_z} = \abs{\nabla v}$, then Lemma \ref{nonBel} implies that
\[
\begin{split}
\frac{\doo F}{\doo \ol{z}}
& = \frac{1- 2^{p-2}A\abs{(Re(F))_z}^{p-2}}{1 + 2^{p-2}A\abs{(Re(F))_z}^{p-2}} \ol{\frac{\doo F}{\doo z} }, \\
& = \mu\ol{\frac{\doo F}{\doo z} },
\end{split}
\]
where $\mu$ is a function depending on the complex derivative of the real part of $F$.

\begin{theorem}\label{SolBel1}
	Let $F$ satisfies the following nonlinear Beltrami equation
	\[
	\frac{\doo F}{\doo \ol{z}} = \mu\ol{\frac{\doo F}{\doo z}} \ \text{in}\  B_8,
	\]
	where $\mu$ is defined as
	\[
	\mu := \frac{1- 2^{p-2}A\abs{(Re(F))_z}^{p-2}}{1 + 2^{p-2}A\abs{(Re(F))_z}^{p-2}}.
	\]
	Then each weak solution $F\in W^{1,2}(B_8)$ is represented by
	\[
	F(z) = (h\circ\phi)(z) \ \text{in}\ B_8,
	\]
	where $\phi : B_8 \rightarrow \phi(B_8)$ is a $K$-quasiconformal mapping and $h:\phi(B_8)\to \R^2$ is holomorphic. Moreover, the quasiconformality coefficient $K$ depends only on $p$, $\alpha$, $\|v\|_{L^\infty(B_8)}$, $\frac{\|A\|_{L^\infty(B_8)}}{c_0}$ and $c_0$ when $p\in (1,2)$ and on $c_0$ when $p=2$.
\end{theorem}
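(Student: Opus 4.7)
The plan is to reduce the nonlinear Beltrami equation to a classical (measurable-coefficient) Beltrami equation by verifying that the coefficient $\mu$ is essentially bounded by some constant strictly less than $1$, and then invoke the Stoilow-type factorization theorem for $W^{1,2}$-solutions (cf.\ \cite[Corollary 5.5.4]{Astala:Iwaniec:Martin} or \cite[Theorem 3.3]{Bojarski:1957}).

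\medskip

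First I would verify that $F=v+iw\in W^{1,2}(B_8)$, which is needed to apply the factorization. When $p=2$ this follows immediately from $v\in W^{1,p}_{loc}$ and the construction of $w$ in Lemma \ref{conjugate}. When $1<p<2$, since $A\in C^{0,\alpha}(\overline{B_8})$ and $A\geq c_0>0$, Lieberman's $C^{1,\beta}$ regularity theorem (applied to the equation $\dive(A|\nabla v|^{p-2}\nabla v)=0$) gives an a priori estimate $\|\nabla v\|_{L^\infty(B_7)}\leq C$ depending on $p$, $\alpha$, $\|v\|_{L^\infty(B_8)}$ and $\|A\|_{L^\infty}/c_0$. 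Together with the relation $|\nabla w|=A|\nabla v|^{p-1}$ from Lemma \ref{conjugate}, this puts $w\in W^{1,\infty}_{loc}\subset W^{1,2}$, so $F\in W^{1,2}(B_8)$.

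\medskip

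Next I would bound the Beltrami coefficient. Using $|(\mathrm{Re}(F))_z|=\tfrac12|\nabla v|$, one rewrites
\[
\mu=\frac{1-t}{1+t},\qquad t:=A|\nabla v|^{p-2},
\]
so $|\mu|\leq k<1$ precisely when $t$ is confined to a compact subinterval of $(0,\infty)$. In the case $p=2$, $t=A$, and the hypothesis $c_0<A\leq \|A\|_{L^\infty(B_8)}$ directly yields $|\mu|\leq k_0<1$ with $k_0$ depending only on $c_0$ and $\|A\|_{L^\infty}$. In the case $1<p<2$, the Lieberman bound $\|\nabla v\|_{L^\infty}\leq C$ combined with $p-2<0$ gives $t\geq c_0C^{p-2}>0$, which is the lower bound on $t$.

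\medskip

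With $\|\mu\|_{L^\infty(B_8)}\leq k<1$ and $F\in W^{1,2}(B_8)$ satisfying $\partial_{\bar z}F=\mu\overline{\partial_z F}$, the measurable Riemann mapping theorem / Stoilow factorization gives a $K$-quasiconformal homeomorphism $\phi:B_8\to\phi(B_8)$ with $K=(1+k)/(1-k)$ and a holomorphic $h:\phi(B_8)\to\mathbb{R}^2$ such that $F=h\circ\phi$. The stated dependence of $K$ on $p$, $\alpha$, $\|v\|_{L^\infty}$, $\|A\|_{L^\infty}/c_0$, and $c_0$ (respectively only on $c_0$ when $p=2$) tracks exactly the parameters used to produce $k$.

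\medskip

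The main obstacle I anticipate is the upper bound on $t=A|\nabla v|^{p-2}$ in the case $1<p<2$, since this requires $|\nabla v|$ to be bounded away from zero and thus control of the critical set of $v$. I would handle this either by working on the open set $\{|\nabla v|>0\}$, appealing to the fact that critical points of nontrivial planar $p$-harmonic-type functions are isolated (with Manfredi/Alessandrini type analysis), and then invoking removability of the resulting isolated singularities of $\phi$; or by a regularization argument replacing $|\nabla v|^{p-2}$ by $(|\nabla v|^2+\varepsilon)^{(p-2)/2}$, deriving a factorization $F_\varepsilon=h_\varepsilon\circ\phi_\varepsilon$ with distortion uniform in $\varepsilon$, and passing to the limit via compactness of $K$-quasiconformal mappings. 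Either route gives the representation on all of $B_8$ with the quantitative control on $K$ as stated.
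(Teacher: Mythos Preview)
Your approach is essentially identical to the paper's: verify $\|\mu\|_{L^\infty}\le k<1$ (using Lieberman's $C^{1,\beta}$ estimate when $1<p<2$), then apply the Stoilow factorization to the resulting $W^{1,2}$ quasiregular map. The case split, the use of $|(Re(F))_z|=\tfrac12|\nabla v|$, and the invocation of \cite[Corollary~5.5.3]{Astala:Iwaniec:Martin} all match.

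Where you differ is that you are \emph{more} careful than the paper on the very point you flag. For $1<p<2$ the paper argues only that $|\nabla v|\le C$ and $A\ge c_0$ give $t=A|\nabla v|^{p-2}\ge C_0>0$, and from this asserts $\|\mu\|_{L^\infty}\le\frac{1-C_0}{1+C_0}<1$. But that inequality is an upper bound on $\mu=\frac{1-t}{1+t}$, not on $|\mu|$; the paper never bounds $t$ from above, i.e.\ never rules out $|\nabla v|\to 0$, which would send $\mu\to -1$. (A direct computation of the pointwise distortion from the formulas in Lemma~\ref{nonBel} gives $K(z)=\max\{A|\nabla v|^{p-2},(A|\nabla v|^{p-2})^{-1}\}$, which blows up at critical points.) So the ``main obstacle'' you anticipate is real, and the paper's own proof simply glosses over it rather than resolving it. Your two suggested repairs---discreteness of the critical set plus removability, or an $\varepsilon$-regularization and compactness---are natural, but note that neither obviously yields $K$ depending only on the listed data (the regularized distortion bound degenerates as $\varepsilon\to 0$, and the removability route needs quantitative control of $|\nabla v|$ near each zero). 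In short: your plan reproduces the paper's argument and correctly isolates a gap that the paper leaves open.
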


\begin{proof}
	\begin{case}
		\it{$p=2$:}
	\end{case}
	In this case, 
	\[
	\mu := \frac{1-A}{1+A}. 
	\]
	In view of our assumptions on $A$, we have $\|\mu\|_{L^\infty(B_8)}\leq \mu_0<1$, where $\mu_0>0$ is a constant depending only on $c_0$. The solution of the above linear Beltrami equation can be represented as 
	\[
	F(z) = (h\circ\phi)(z), \  \ z\in B_8,
	\]
	where $\phi : B_8 \rightarrow \phi(B_8)$ is a $K$-quasiconformal mapping with $K$ depending only on $\mu_0$ and $h:\phi(B_8)\to \R^2$ is holomorphic; see e.g.~\cite[Theorem 5.5.1]{Astala:Iwaniec:Martin}.

	\begin{case}
		\it{$1<p<2$:}
	\end{case}
	In this case,
	\[
	\mu := \frac{1- 2^{p-2}A\abs{(Re(F))_z}^{p-2}}{1 + 2^{p-2}A\abs{(Re(F))_z}^{p-2}}.
	\]
	Since $A\in C^{0,\alpha}(\overline{B_8})$, $0<\alpha<1$, the weak solution $v\in C^{1,\beta}(\overline{B_8})$ (cf.~\cite{Lieberman:1988}), i.e., there exists $\beta = \beta(p,\alpha,l)$ with $0<\beta<1$ so that $\|v\|_{C^{1,\beta}(\overline{B_8})} \leq C$ for some positive constant $C = C(p,\alpha,l,\|v\|_{L^\infty(B_8)})$, where $l=\frac{\|A\|_{L^\infty(B_8)}}{c_0}$. In particular, $\|\nabla v\|_{C^{0}(\overline{B_8})} \leq C$ and thus
	\begin{equation}\label{liberm}
	\abs{\nabla v} \leq \|(Re(F))_z\|_{C^{0}(\overline{B_8})} \leq C.
	\end{equation}
	Since we also have $A\geq c_0$ in $B_8$, 
	\[
	\|\mu\|_{L^\infty(B_8)} \leq \frac{1-C_0}{1+C_0} < 1
	\]
	where $C_0 = C_0(p,\alpha,l,\|v\|_{L^\infty(B_8)},c_0)$ is a positive constant.
	
	In either cases, we infer that for some $k<1$,
	\begin{align*}
		\Big|\frac{\partial F}{\partial \bar{z}}\Big|\leq k\Big|\frac{\partial F}{\partial z}\Big|.
	\end{align*}
	This implies that $F$ is a $K$-quasiregular mapping with $K$ depending only on $k$ (and hence only on $p,\alpha$, $l$, $\|v\|_{L^\infty(B_8)}$ and $c_0$) (cf.~\cite[Section 5]{Astala:Iwaniec:Martin}). The final claim follows immediately from the well-known Stoilow factorization; see e.g.~\cite[Corollary 5.5.3]{Astala:Iwaniec:Martin}. 
 
\end{proof}

\begin{proof}[Proof of Theorem \ref{thm:SUCP 1<p<2}]
	We first assume that $z_0=0$ and $B_8\subset \Omega$. Consider the equation~\eqref{eq1}. Let $v\in W^{1,2}(B_8)$ be a weak solution of~\eqref{eq1}. Set $F= v+iw$, where $w$ is the conjugate weighted $q$-harmonic function given as in Lemma \ref{conjugate}. Then $F$ satisfies the nonlinear Beltrami equation
	\begin{equation}\label{pBel}
	\frac{\doo F}{\doo \ol{z}} = \mu\ol{\frac{\doo F}{\doo z}},
	\end{equation}
	where $\mu$ is defined as
	\[
	\mu := \frac{1- 2^{p-2}A\abs{(Re(F))_z}^{p-2}}{1 + 2^{p-2}A\abs{(Re(F))_z}^{p-2}}.
	\]
	Theorem \ref{SolBel1} implies that each $W^{1,2}(B_8)$-solution of \eqref{pBel} can be written as 
	\[
	F(z) = (h\circ\phi)(z) \ \text{in}\  B_8,
	\]
	where $\phi : B_8 \rightarrow \phi(B_8)$ is a $K$-quasiconformal mapping with $K$ depending only on $p,\alpha$, $l$, $\|v\|_{L^\infty(B_8)}$ and $c_0$ as in the previous theorem, and $h:\phi(B_8)\to \R^2$ is holomorphic.
	
	Now we apply the Hadamard's three circle theorem to $F=h\circ\phi$ (cf.~\eqref{eq:three circle} in the proof of Proposition \ref{prop:key prop}) to deduce that
	\begin{equation}\label{weHada}
	\|F\|_{L^{\infty}(B_1)} \leq \|F\|_{L^{\infty}(B_{r/4})}^{\theta} \|F\|_{L^{\infty}(B_2)}^{1-\theta},
	\end{equation}
	where $0<\theta=\theta(r) = \frac{\log\frac{4}{4-(\frac{3}{C_0})^{1/\alpha}}}{\log\frac{4}{(\frac{r}{C_0})^{1/\alpha}}}<1$ with $C_0$ depending only on $K$.

	Applying Lemma \ref{lemma:upper Harnack} for the $K$-quasiregular mapping $F$ and using the H\"older's inequality, we have the following estimates
	\begin{align}
	\|F\|_{L^{\infty}(B_{r/4})}
	& \leq \frac{C}{\abs{B_{r/2}}} \int_{B_{r/2}} \abs{F} \nonumber \\
	& \leq \frac{C}{\abs{B_{r/2}}} \left(\int_{B_{r/2}}1 \right)^{1/p} \left( \int_{B_{r/2}}\abs{F}^{p'}\right)^{1/{p'}} \nonumber \\
	& \leq C \abs{B_{r/2}}^{\frac{1}{p}-1}\|F\|_{L^{p'}(B_r/2)} \nonumber \\
	& \leq C \pi(r/2)^{-2/p'} \|F\|_{L^{p'}(B_{r/2})} \nonumber \\
	& \leq C r^{-2/p'} [\|v\|_{L^{p'}(B_{r/2})} + \|w\|_{L^{p'}(B_{r/2})}], \label{star}
	\end{align}
	where $1/p+1/{p'}=1$ and $C$ depends only on $K$. 
	In order to estimate $\|w\|_{L^{p'}(B_{r/2})}$, we follow similar approach from~\cite[Section 4]{Kenig:Wang:2015} and proceed as follows:
	\[
	\begin{split}
	\int_{B_{r/2}}\abs{w(x)}^{p'}dx
	& = \int_{B_{r/2}}\abs{w(x) - w(0)}^{p'} dx \\
	& = \int_{B_{r/2}} \abs{\int_{0}^{1} \nabla w(tx) \cdot x dt}^{p'} dx \\
	& \leq \left( \frac{r}{2}\right)^{p'} \int_{B_{r/2}} \abs{\int_{0}^{1} \nabla w(tx)}^{p'} dt dx \\
	& \leq \left( \frac{r}{2}\right)^{p'+1} \int_{0}^{r/2} \left\{\frac{1}{\abs{B_s}}\int_{B_s}\abs{\nabla w(y)}^{p'} dy \right\}ds. 
	\end{split}
	\]
	Using the fact that $\abs{\nabla w} = \abs{A}\abs{\nabla v}^{p-1}$, we obtain
	\[
	\begin{split}
	\int_{B_{r/2}}\abs{w(x)}^{p'}dx
	& \leq C r^{p'+1} \int_{0}^{r/2} \left[ \frac{1}{\abs{B_s}} \int_{B_s}\abs{A}^{p'}\abs{\nabla v}^{p'(p-1)} dy\right]ds \\
	& \leq C r^{p'+1} \int_{0}^{r/2} \left[ \frac{1}{\abs{B_s}} \|A\|_{L^{\infty}(B_s)}^{p'}\int_{B_s}\abs{\nabla v}^p dy\right]ds.
	\end{split}
	\]
	Since $\|A\|_{L^{\infty}(B_8)} \leq C_1$, applying Caccioppoli's inequality, Lemma \ref{cacciWE2}, we obtain that
	\begin{equation}\label{mcest}
	\int_{B_{r/2}}\abs{w(x)}^{p'}dx \leq C r^{p'+1}\int_{0}^{r/2}\frac{\|v\|_{L^{\infty}(B_{2s})}^{p}}{s^p\abs{B_s}} ds,
	\end{equation}
	where $C>0$ depends only on $p$, $K$ and $C_1$. From our assumption that $v$ vanishes at $0$ in an infinite order, i.e., $\exists \tilde C>0$ and $\tilde r < 8$ such that
	\begin{equation}\label{starstarstar}
	\abs{v(z)} \leq \tilde C \abs{z}^N, \ \forall \ \abs{z}<\tilde r,
	\end{equation}
	we infer that
	\[
	\int_{B_4}\abs{w(x)}^{p'} \leq C
	\]
	and hence
	\begin{equation}\label{estiii}
	\|F\|_{L^{\infty}(B_2)} \leq C,
	\end{equation}
	where $C>0$ depends on $p$, $K$ and $C_1$. Now, if $\|v\|_{L^{\infty}(B_1)} \geq e^{-l_0}$ for some $l_0>0$, then \eqref{weHada} and \eqref{estiii} would give us
	\[
	\begin{split}
	e^{-l_0} \leq \|v\|_{L^{\infty}(B_1)} 
	& \leq C_1C^{1-\theta} \|F\|_{L^{\infty}(B_{r/4})}^{\theta},
	\end{split}
	\]
	i.e.,
	\[
	\tilde C r^{\tilde C l_0} \leq \|F\|_{L^{\infty}(B_{r/4})}.
	\]
On the other hand, using \eqref{star}, \eqref{mcest} and \eqref{starstarstar}, we may conclude that, there exist $N_0>\tilde{C}l_0$ and $r_{N_0}<\tilde{r}$ so that 
	\[
	\|F\|_{L^{\infty}(B_{r/4})} \leq C_{N_0}r^{N_0}, \ \text{for all}\ r<r_{N_0}.
	\]
	The contradiction implies that for all $l_0>0$,  $\|v\|_{L^{\infty}(B_1)} \leq e^{-l_0}$, and hence $v \equiv 0$ in $B_1$. 
	
	Finally, we will finish the proof using the scaling argument as in \cite{Bourgain:Kenig:2005} and \cite{Kenig:Wang:2015}. Let us take any point $z_0\in \Omega$ so that
	\[
	\abs{v(z)} = \mathcal{O}(\abs{z-z_0}^N) \ \text{as}\ \abs{z-z_0} \rightarrow 0, 
	\]
	for all $N>0$. Choose $r_0>0$ such that $B_{8r_0}(z_0) \subset \Omega$. Set $u(z) = v(z_0+r_0z)$ and $\sigma(z) = r_0A(z_0+r_0z)$. Then $u$ satisfies 
	\[
	\text{div}(\sigma\abs{\nabla u}^{p-2}\nabla u) = 0 \ \text{in}\ B_8
	\]
	and
	\[
	\|\sigma\|_{L^{\infty}(B_8)} \leq C \|A\|_{L^{\infty}(B_{8r_0}(z_0))} \leq C_1.
	\]
	Hence, using the previous argument, we know that $u= 0$ in $B_1$, which implies $v=0$ in $B_{r_0}(z_0).$ Now, by the chain of balls argument as in \cite[Corollary 3.5]{Kenig:Wang:2015}, we easily conclude that $v\equiv 0$ in $\Omega$. 
\end{proof}

\section{Concluding remarks}\label{sec:concluding remark}
In this section, we give a short remark on the local regularity assumption on $W$ in Theorem~\ref{thm:main thm}. We have assumed that $W$ is locally Lipschitz and this assumption was only used in Lemma~\ref{lemma:regularity of F} to ensure that we are legal to take the complex gradient of solutions of~\eqref{eq:main equation} to derive certain quasilinear Beltrami equation. The constant $C$ appeared in the asymptotic estimate in Theorem~\ref{thm:main thm} does not depend on the data associated to the local Lipschitz regularity. Hence, we expect that this assumption is only a technical assumption and can be possibly get rid of by a standard regularizing procedure as follows. Given a general $W$, say belongs to $L^\infty_{loc}(\R^2)$. Let $W_\varepsilon=\eta_\varepsilon*W$ be the standard smooth approximation of $W$ in $L^\infty_{loc}(\R^2)$. For each $\varepsilon>0$, consider the equations
\begin{align}\label{eq:mollifier}
	\dive(|\nabla u|^{p-2}\nabla u)+W_\varepsilon\cdot (|\nabla u|^{p-2}\nabla u)=0\ \text{ in }\R^2.
\end{align}
Applying Theorem~\ref{thm:main thm} for (weak or $C^{1,\alpha}$) solutions $u_\varepsilon$ of \eqref{eq:mollifier}, we may conclude that the quantitative estimates as in Theorem~\ref{thm:main thm} hold for each $u_\varepsilon$ with a constant $C$ independent of $\varepsilon$. It is plausible that the approximation argument from~\cite{DiBenedetto:1983} will imply that $u_\varepsilon\to u$ in $L^\infty_{loc}(\R^2)$. Since $C$ is independent of $\varepsilon$, the same estimate would hold for $u$ as well.

\bibliographystyle{plain}
\bibliography{math11}

\end{document}